\documentclass[11pt]{amsart}

\usepackage[a4paper,margin=30mm]{geometry}
\usepackage{amsmath,amssymb,amsthm,mathtools}
\usepackage{graphicx}
\usepackage{enumitem}
\usepackage{hyperref}
\usepackage[nameinlink,capitalise]{cleveref}
\hypersetup{hidelinks}
\crefname{assumption}{Assumption}{Assumptions}
\Crefname{assumption}{Assumption}{Assumptions}
\usepackage{mathrsfs}

\newtheorem{theorem}{Theorem}[section]

\newtheorem{corollary}[theorem]{Corollary}

\theoremstyle{definition}

\newtheorem{remark}[theorem]{Remark}

\title[Exact Rayleigh Reduction]
{Exact Rayleigh Reduction for Direct Detection of Turning Bifurcations}

\author{Yavdat Il'yasov}
\address{Institute of Mathematics with Computing Centre, Ufa Federal Research Centre of the Russian Academy of Sciences, Ufa, Russia}
 \email{ilyasov02@gmail.com}

\subjclass[2020]{35B32, 35J20, 35J60, 47J30, 58E05}
\keywords{exact Rayleigh reduction, generalized Rayleigh functional,
	turning bifurcation, Kirchhoff equation, scaling orbit,
	normalized profile, dimension--homogeneity trichotomy}

\begin{document}

\begin{abstract}
	We introduce an exact Rayleigh reduction for the direct construction and
	detection of turning bifurcations in nonlinear parameter-dependent
	equations. A generalized Rayleigh functional ordinarily provides only a
	scalar necessary constraint on solutions. We show that, along a suitable
	one-parameter transformation orbit $u=\mathcal G_s\phi$, it can instead
	become the exact parameter law
	$\lambda=\mathcal R(\mathcal G_s\phi)$ of a genuine solution branch.
	Turning points are then found from nondegenerate critical points of this
	one-dimensional Rayleigh profile. The branch tangent automatically yields
	a kernel direction of the state linearization, and under the usual
	Fredholm, kernel-simplicity, and parameter-transversality assumptions the
	detected point is a simple fold.
	
	For Kirchhoff equations, amplitude scaling on bounded domains and spatial
	dilation on $\mathbb R^N$ yield explicit exact branches, global parameter
	thresholds, and countable hierarchies of turning values. For the generalized
	Kirchhoff law $M_b(A)=a+bA^\theta$, the branch geometry is governed by the
	dimension--homogeneity index $\theta(N-2)-2$: an interior turning
	bifurcation occurs exactly when $\theta(N-2)>2$. This gives a universal
	bifurcation trichotomy that is independent of the particular
	Berestycki--Lions nonlinearity.
\end{abstract}
\maketitle

\section{Introduction}
\label{sec:introduction}

A central problem in nonlinear analysis is to determine the parameter values
at which the structure of solutions changes. Such values mark the appearance,
merging, or disappearance of solution branches and arise naturally in
nonlinear elliptic equations, variational problems, and models with nonlocal
interactions. Classical bifurcation and singularity theories provide powerful
tools for describing the local structure near such points; see, for example,
\cite{arnold,GolubitskySchaeffer1985,seydel}.

Throughout the paper, by a \emph{turning bifurcation} we mean a point at
which two nearby solution states coalesce as the parameter reaches a
critical value and the corresponding solution branch turns with respect to
the parameter. A classical simple fold is obtained when the usual additional
Fredholm and transversality conditions hold.

There is, however, a complementary constructive question: can such a
critical parameter be detected directly from the nonlinear equation before
the singular point itself is known? This is particularly relevant when the
solution set is not explicitly available and one seeks not only the critical
value, but also the mechanism responsible for the change in solution
geometry.

The purpose of this paper is to develop a generalized Rayleigh approach to
this problem. Consider
\[
F(\lambda,u)=0,
\qquad
F:\mathbb R\times X\to Y,
\]
and suppose that every nontrivial solution satisfies a parameter relation
\begin{equation}\label{eq:intro-parameter-Rayleigh}
	\lambda=\mathcal R(u),
\end{equation}
where $\mathcal R$ is a generalized Rayleigh functional. In general,
\eqref{eq:intro-parameter-Rayleigh} is only a necessary scalar constraint:
\[
\lambda=\mathcal R(u)
\qquad\not\Longrightarrow\qquad
F(\lambda,u)=0.
\]

The main concept introduced here is an \emph{exact Rayleigh reduction}.
We seek a one-parameter transformation orbit
\[
u(s)=\mathcal G_s\phi
\]
and distinguished profiles $\phi$ for which the Rayleigh relation becomes
the actual parameter law of an exact solution branch:
\begin{equation}\label{eq:intro-exact-orbit}
	F\bigl(
	\mathcal R(\mathcal G_s\phi),
	\mathcal G_s\phi
	\bigr)=0.
\end{equation}
For every such profile,
\begin{equation}\label{eq:intro-Rayleigh-profile}
	\lambda_\phi(s)
	=
	\mathcal R(\mathcal G_s\phi)
\end{equation}
is the parameter along the exact branch
\[
s\longmapsto
\bigl(
\lambda_\phi(s),\mathcal G_s\phi
\bigr).
\]
Thus an infinite-dimensional bifurcation problem is reduced, along an exact
orbit, to the geometry of the one-dimensional Rayleigh profile
$\lambda_\phi$. Its nondegenerate critical points detect turning
bifurcations of the exact branch.

In applications, the exactness condition
\eqref{eq:intro-exact-orbit} typically reduces to a normalized profile
equation whose solutions select the admissible exact branches. A turning
bifurcation candidate is then located directly from
\begin{equation}\label{eq:intro-critical-Rayleigh}
	\lambda_\phi'(s)=0.
\end{equation}
At a nondegenerate critical point $s_*$, differentiation of the exact branch
identity gives
\[
F_u(\lambda_*,u_*)u'(s_*)=0,
\qquad
u_*=\mathcal G_{s_*}\phi,
\qquad
\lambda_*=\lambda_\phi(s_*).
\]
Hence the tangent to the explicitly constructed branch provides a kernel
direction of the state linearization. Under additional kernel-simplicity,
Fredholm, and parameter-transversality assumptions, the detected turning
bifurcation is a simple fold. Whenever the operator realization is of class
$C^2$, this agrees with the classical Lyapunov--Schmidt simple-fold
criterion.

This gives generalized Rayleigh quotients a different role. Rather than
merely identifying distinguished parameter levels, they may reconstruct
exact portions of the nonlinear solution set itself. Multiplicity, turning
thresholds, and branch hierarchies can then be read directly from the
geometry of explicit Rayleigh profiles. Exact Rayleigh reduction and local
bifurcation theory therefore play complementary roles: the former constructs
the relevant branch and locates its critical parameter, while the latter
provides a local classification of the singularity.

The approach originates from nonlinear Rayleigh quotient methods developed
for Nehari and Pohozaev manifolds
\cite{Ilyasov2002Bif,IlyasovEgorov2010, Ilyasov2017, Ilyasov2022LevelSets}.
There, critical levels of generalized Rayleigh functionals were used to
characterize distinguished parameter values and extremal thresholds.
Related minimax constructions for singular parameter values were developed
through extended Rayleigh quotient methods
\cite{Ilyasov2007,Ilyasov2021,Ilyasov2026Minimax}.
The present paper takes a further step: under an appropriate transformation,
the Rayleigh functional becomes the parameter law of an exact solution
branch whose geometry reveals the bifurcation structure.

We develop this mechanism for Kirchhoff equations. These equations originate
in Kirchhoff's model for the transverse vibrations of a stretched elastic
string, in which the tension depends on the global deformation of the string
and therefore introduces a genuinely nonlocal feedback
\cite{Kirchhoff1883,Ma2005}. Mathematically, the Kirchhoff coefficient couples
a local differential operator to a global energy of the state. This structure
is particularly well suited to exact Rayleigh reduction, since the global
energy interacts naturally with scaling transformations. The method then
isolates the competition between this nonlocal feedback and the homogeneity
of the underlying scaling. In this sense, Kirchhoff equations serve here as
a model class for the reduction mechanism rather than as its endpoint.

Two different reductions are considered: amplitude scaling on bounded
domains and spatial dilation on $\mathbb R^N$.

On bounded domains, the method remains exact in the presence of a positive
spatial weight $k(x)$. For the weighted Kirchhoff problem, the Nehari
Rayleigh functional combined with amplitude scaling generates exact
branches from solutions of a weighted normalized profile equation. The
resulting Rayleigh geometry yields explicit turning bifurcation values, a
global upper existence threshold, and a countable hierarchy of exact
branches. Remarkably, the spatial weight changes the profile-dependent
bifurcation levels but leaves the critical amplitude of every exact branch
unchanged.

On $\mathbb R^N$, spatial dilation leads naturally to a Pohozaev Rayleigh
functional and reduces the Kirchhoff equation to the nonlinear scalar-field
equation
\[
-\Delta\phi=g(\phi).
\]
Every normalized profile generates an exact spatial branch, while the
Nehari representation, when applicable, gives the same parameter law along
that branch. The resulting reduction separates the scalar-field profile
from the geometry induced by the Kirchhoff scaling.

For some particular classes of Kirchhoff equations, parameter thresholds
associated with changes in the number of solutions have previously been
obtained; see
\cite{LiangLiShi2017,LiuLiaoPanTang2022,Silva2019}.
Scalar rescaling relations underlying positive solutions, together with the
corresponding two--one--zero multiplicity pattern, also appear in the
Kirchhoff literature; see
\cite{Azzollini2012,Azzollini2015,HuangLiuWuProc2016}.
These works provide important existence, threshold, and multiplicity
results, and in some cases describe the behavior of solutions near critical
parameter values. The present approach addresses the bifurcation structure
more directly: critical parameter values are located together with the
associated turning bifurcation points, the resulting solution branches are
constructed globally through an exact Rayleigh reduction, and the same
construction yields a countable hierarchy of exact bifurcation branches.
Moreover, the mechanism extends beyond the standard Kirchhoff law.

Indeed, for the generalized Kirchhoff law
\[
M_b(A)=a+bA^\theta,
\]
the exact spatial reduction yields
\[
b_\phi(s)
=
\frac{1-as^2}{A(\phi)^\theta}
s^{\theta(N-2)-2}.
\]
Thus the branch geometry is governed by the dimension--homogeneity index
\[
\theta(N-2)-2.
\]
In particular,
\[
\boxed{
	\theta(N-2)>2
}
\]
is the sharp condition for the occurrence of an interior turning
bifurcation. The supercritical, critical, and subcritical cases
$\theta(N-2)>2$, $\theta(N-2)=2$, and $\theta(N-2)<2$ lead to three
distinct branch geometries. The familiar dimensional distinction for the
standard Kirchhoff equation is therefore a special case of a more general
dimension--homogeneity balance.

The same reduction produces a countable hierarchy of exact spatial
branches. Their turning values are ordered inversely by the energies of the
normalized scalar-field profiles and accumulate at zero, while a
ground-state profile generates the maximal turning value and the global
upper existence threshold. Thus the exact Rayleigh reduction describes
simultaneously the turning mechanism, the global threshold, and the
multiplicity structure.

The paper is organized as follows.
Section~\ref{sec:constraint-to-exact} develops the abstract exact Rayleigh
reduction and its relation to classical local fold theory.
The subsequent sections apply the method to weighted bounded-domain and
whole-space Kirchhoff equations and study the corresponding exact branches,
turning bifurcation thresholds, and branch hierarchies.

\section{Exact Rayleigh reduction and direct bifurcation detection}
\label{sec:constraint-to-exact}

We develop here the abstract mechanism underlying the Rayleigh approach to
bifurcation. The starting point is a generalized Rayleigh representation of
the parameter, but the essential new step is to combine it with a
one-parameter transformation so that the resulting scalar Rayleigh profile
becomes the actual parameter law of an exact solution branch.

Let
\[
F(\lambda,u)=0,
\qquad
F:\mathbb R\times X\to Y,
\]
where $X$ and $Y$ are Banach spaces. Suppose that every nontrivial solution
satisfies a scalar constraint
\[
\mathcal C(\lambda,u)=0,
\]
which can be resolved with respect to the parameter:
\begin{equation}\label{eq:Rayleigh-representation}
	\mathcal C(\lambda,u)=0
	\qquad\Longleftrightarrow\qquad
	\lambda=\mathcal R(u).
\end{equation}
We call $\mathcal R$ the generalized Rayleigh functional associated with
the constraint.

Such representations arise naturally from Nehari, Pohozaev, energy, and
other scalar identities. They form the basis of nonlinear generalized
Rayleigh quotient methods for detecting distinguished parameter levels
\cite{Ilyasov2017,Ilyasov2022LevelSets}. By themselves, however, they
describe only constraint geometry:
\[
\lambda=\mathcal R(u)
\qquad\not\Longrightarrow\qquad
F(\lambda,u)=0.
\]
The purpose of the reduction below is to identify situations in which this
implication becomes exact along a suitable transformation orbit.

\subsection{Orbital Rayleigh profiles and exact reduction}
\label{subsec:orbital-exact-Rayleigh}

Let
\[
\mathcal G_s:X\to X,
\qquad s>0,
\qquad
\mathcal G_1=\mathrm{Id},
\]
be a one-parameter family of transformations, and let
\[
\mathcal O\subset X\setminus\{0\}
\]
be an admissible class invariant under this action:
\begin{equation}\label{eq:G-domain-invariance}
	\mathcal G_s(\mathcal O)\subset\mathcal O,
	\qquad s>0.
\end{equation}

For $u\in\mathcal O$, define the orbital Rayleigh profile
\begin{equation}\label{eq:orbital-Rayleigh-profile}
	r_u(s)
	:=
	\mathcal R(\mathcal G_su),
	\qquad s>0.
\end{equation}
The scalar equation
\begin{equation}\label{eq:orbital-level-equation}
	r_u(s)=\lambda
\end{equation}
describes the intersections of the transformation orbit with the Rayleigh
level $\mathcal R=\lambda$. Its critical points satisfy
\begin{equation}\label{eq:orbital-criticality}
	r_u'(s)=0.
\end{equation}
At this stage these are only critical points of the constraint geometry,
not necessarily bifurcation points of the original equation.

The decisive situation occurs when the Rayleigh representation is exact
along an orbit. We say that a profile $\phi\in\mathcal O$ generates an
\emph{exact Rayleigh reduction} if
\begin{equation}\label{eq:exact-Rayleigh-orbit}
	F\bigl(
	\mathcal R(\mathcal G_s\phi),
	\mathcal G_s\phi
	\bigr)
	=
	0,
	\qquad s>0.
\end{equation}
Then
\begin{equation}\label{eq:exact-Rayleigh-profile}
	\lambda_\phi(s)
	:=
	\mathcal R(\mathcal G_s\phi)
\end{equation}
is the actual parameter law of the solution branch
\[
s
\longmapsto
\bigl(
\lambda_\phi(s),\mathcal G_s\phi
\bigr).
\]
Thus the geometry of the nonlinear solution set along this branch is
reduced exactly to the geometry of a one-dimensional Rayleigh profile.

This is the basic Rayleigh reduction principle: once an exact profile
$\phi$ has been identified, turning bifurcations can be detected directly
from the scalar condition
\begin{equation}\label{eq:exact-Rayleigh-critical}
	\boxed{
		\lambda_\phi'(s_*)=0.
	}
\end{equation}

\subsection{Rayleigh bifurcation criterion}
\label{subsec:Rayleigh-bifurcation-criterion}

The following result makes the preceding geometric mechanism precise.

\begin{theorem}[Rayleigh bifurcation criterion]
	\label{thm:Rayleigh-bifurcation}
	
	Let $\phi\in\mathcal O$ generate an exact Rayleigh branch
	\[
	F\bigl(
	\lambda_\phi(s),
	\mathcal G_s\phi
	\bigr)=0,
	\qquad
	\lambda_\phi(s)
	=
	\mathcal R(\mathcal G_s\phi).
	\]
	Assume that $\lambda_\phi$ is of class $C^2$ near some $s_*>0$ and
	\[
	\lambda_\phi'(s_*)=0,
	\qquad
	\lambda_\phi''(s_*)\neq0.
	\]
	Assume also that the orbit is of class $C^1$ and regular at $s_*$:
	\[
	\left.
	\frac{d}{ds}\mathcal G_s\phi
	\right|_{s=s_*}
	\neq0.
	\]
	
	Then
	\[
	(\lambda_\phi^*,u_*),
	\qquad
	\lambda_\phi^*:=\lambda_\phi(s_*),
	\qquad
	u_*:=\mathcal G_{s_*}\phi,
	\]
	is a nondegenerate turning point of the exact Rayleigh branch; in the
	terminology adopted above, it is a turning bifurcation of that branch.
	More precisely:
	
	\begin{itemize}
		\item if $\lambda_\phi''(s_*)<0$, then there exists
		$\varepsilon>0$ such that for every
		\[
		\lambda\in
		(\lambda_\phi^*-\varepsilon,\lambda_\phi^*)
		\]
		there are two distinct exact solutions
		\[
		u_-(\lambda),u_+(\lambda),
		\]
		with
		\[
		F(\lambda,u_\pm(\lambda))=0,
		\qquad
		u_\pm(\lambda)\to u_*
		\quad\text{as }\lambda\uparrow\lambda_\phi^*;
		\]
		
		\item if $\lambda_\phi''(s_*)>0$, then there exists
		$\varepsilon>0$ such that for every
		\[
		\lambda\in
		(\lambda_\phi^*,\lambda_\phi^*+\varepsilon)
		\]
		there are two distinct exact solutions
		\[
		u_-(\lambda),u_+(\lambda),
		\]
		with
		\[
		F(\lambda,u_\pm(\lambda))=0,
		\qquad
		u_\pm(\lambda)\to u_*
		\quad\text{as }\lambda\downarrow\lambda_\phi^*.
		\]
	\end{itemize}
	
\end{theorem}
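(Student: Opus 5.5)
The plan is to reduce everything to the one-dimensional calculus of $\lambda_\phi$ near $s_*$ and then transport the conclusions to $X$ through the $C^1$ orbit $s\mapsto\mathcal G_s\phi$. Consider the case $\lambda_\phi''(s_*)<0$; the other case follows by the same argument with the inequalities reversed (or by replacing $\lambda$ with $-\lambda$). The relevant facts are these:
\[
\lambda_\phi\in C^2,\qquad \lambda_\phi'(s_*)=0,\qquad \lambda_\phi''<0 \text{ on some } [s_*-\delta,s_*+\delta].
\]
Hence $\lambda_\phi'>0$ on $[s_*-\delta,s_*)$ and $\lambda_\phi'<0$ on $(s_*,s_*+\delta]$. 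So $\lambda_\phi$ is strictly increasing on $[s_*-\delta,s_*]$ and strictly decreasing on $[s_*,s_*+\delta]$, with strict maximum $\lambda_\phi^*$ at $s_*$.

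Next I set
\[
\varepsilon:=\min\{\lambda_\phi^*-\lambda_\phi(s_*-\delta),\ \lambda_\phi^*-\lambda_\phi(s_*+\delta)\}>0.
\]
By the intermediate value theorem and strict monotonicity, each $\lambda\in(\lambda_\phi^*-\varepsilon,\lambda_\phi^*)$ has exactly one preimage $s_-(\lambda)\in(s_*-\delta,s_*)$ and exactly one preimage $s_+(\lambda)\in(s_*,s_*+\delta)$. Both are continuous inverse branches and satisfy $s_\pm(\lambda)\to s_*$ as $\lambda\uparrow\lambda_\phi^*$. Define
\[
u_\pm(\lambda):=\mathcal G_{s_\pm(\lambda)}\phi .
\]
The exactness identity \eqref{eq:exact-Rayleigh-orbit} gives $F(\lambda,u_\pm(\lambda))=F(\lambda_\phi(s_\pm),\mathcal G_{s_\pm}\phi)=0$. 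Continuity of the orbit gives $u_\pm(\lambda)\to\mathcal G_{s_*}\phi=u_*$.

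The only real obstacle is distinctness, $u_-(\lambda)\neq u_+(\lambda)$, because the orbit map need not be globally injective. This is where regularity $\frac{d}{ds}\mathcal G_s\phi|_{s_*}=:v\neq0$ enters, and I would use it to obtain local injectivity. Choose a functional $\ell\in X^*$ with $\ell(v)=1$ (Hahn--Banach). The scalar function $h(s)=\ell(\mathcal G_s\phi)$ is $C^1$ with $h'(s_*)=1$. Shrinking $\delta$ if necessary, $h'>0$ on $[s_*-\delta,s_*+\delta]$, so $h$ is strictly increasing there. Since $s_-(\lambda)<s_*<s_+(\lambda)$, we get $h(s_-)<h(s_+)$, and therefore $u_-(\lambda)\neq u_+(\lambda)$. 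This shrinking of $\delta$ must be done before $\varepsilon$ is fixed, so I order the steps accordingly.

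Finally, the fact that $(\lambda_\phi^*,u_*)$ is a nondegenerate turning point of the branch is a restatement of what has been shown. The branch parameter has a nondegenerate extremum at $s_*$. Two branch points coalesce at $u_*$ from one side of $\lambda_\phi^*$. On the other side no branch points exist near $s_*$, because $\lambda_\phi\le\lambda_\phi^*$ on the interval. The branch itself is regular there, since its tangent $v$ is nonzero. As a remark, differentiating the exact identity at $s_*$ gives $F_u(\lambda_*,u_*)v=0$ whenever $F$ is $C^1$, but the theorem as stated does not require this.
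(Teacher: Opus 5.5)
Your proof is correct and follows the paper's argument: the nondegenerate extremum of $\lambda_\phi$ at $s_*$ gives two preimages $s_-(\lambda)<s_*<s_+(\lambda)$, and exactness makes $\mathcal G_{s_\pm(\lambda)}\phi$ solutions. A Hahn--Banach functional $\ell$ with $\ell(u'(s_*))\neq0$ then makes $\ell(\mathcal G_s\phi)$ strictly monotone near $s_*$, which gives distinctness. Your version only makes explicit what the paper leaves implicit: $\delta$ is fixed (so that both $\lambda_\phi''$ has a sign and $h'>0$ on the interval) before $\varepsilon$ is chosen.
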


\begin{proof}
	We consider the case
	\[
	\lambda_\phi''(s_*)<0;
	\]
	the case $\lambda_\phi''(s_*)>0$ is completely analogous, with the
	inequalities reversed.
	
	Since
	\[
	\lambda_\phi'(s_*)=0,
	\qquad
	\lambda_\phi''(s_*)<0,
	\]
	the point $s_*$ is a strict nondegenerate local maximum of the scalar
	profile. Hence, for $\lambda<\lambda_\phi^*$ sufficiently close to
	$\lambda_\phi^*$, there exist
	\[
	s_-(\lambda)<s_*<s_+(\lambda)
	\]
	such that
	\[
	\lambda_\phi(s_\pm(\lambda))=\lambda.
	\]
	By exactness,
	\[
	u_\pm(\lambda)
	:=
	\mathcal G_{s_\pm(\lambda)}\phi
	\]
	are solutions of $F(\lambda,u)=0$.
	
	The regularity condition
	\[
	\left.
	\frac{d}{ds}\mathcal G_s\phi
	\right|_{s=s_*}
	\neq0
	\]
	also implies local injectivity of the orbit near $s_*$. Indeed, by the
	Hahn--Banach theorem one can choose $\ell\in X^*$ such that
	$\ell(u'(s_*))\neq0$, where $u(s)=\mathcal G_s\phi$; hence
	$\ell(u(s))$ is strictly monotone in a sufficiently small neighborhood
	of $s_*$. Therefore $s_-(\lambda)\neq s_+(\lambda)$ gives
	\[
	u_-(\lambda)\neq u_+(\lambda).
	\]
	Finally,
	\[
	s_\pm(\lambda)\to s_*
	\]
	as $\lambda\uparrow\lambda_\phi^*$, and continuity of the orbit yields
	\[
	u_\pm(\lambda)\to\mathcal G_{s_*}\phi=u_*.
	\]
	
	For $\lambda_\phi''(s_*)>0$, the point $s_*$ is instead a strict
	nondegenerate local minimum, and the same argument gives two exact
	solutions for $\lambda>\lambda_\phi^*$ sufficiently close to
	$\lambda_\phi^*$.
\end{proof}
The importance of this criterion is that the turning point is obtained
from the explicit Rayleigh geometry of the exact branch itself. Neither a
singular point nor a kernel of the linearized operator needs to be known in
advance.

\subsection{Class-dependent Rayleigh thresholds}
\label{subsec:class-Rayleigh-thresholds}

The exact reduction may also be combined with a global Rayleigh threshold.
Let
\[
\mathcal S\subseteq\mathcal O
\]
be an admissible class, for example a positive, nonnegative, radial, or
otherwise invariant class.

For $u\in\mathcal S$, define the upper orbital level
\begin{equation}\label{eq:S-orbit-max}
	m_{\mathcal G}(u)
	:=
	\sup_{s>0}
	\mathcal R(\mathcal G_su),
\end{equation}
and the corresponding class-dependent Rayleigh threshold
\begin{equation}\label{eq:G-S-second-threshold}
	\boxed{
		\lambda_{\mathcal G,\mathcal S}^{**}
		:=
		\sup_{u\in\mathcal S}
		\sup_{s>0}
		\mathcal R(\mathcal G_su).
	}
\end{equation}

If $\mathcal S$ is invariant under the transformation,
\[
\mathcal G_s(\mathcal S)\subseteq\mathcal S,
\qquad s>0,
\]
then
\begin{equation}\label{eq:G-S-second-threshold-sup}
	\boxed{
		\lambda_{\mathcal G,\mathcal S}^{**}
		=
		\sup_{u\in\mathcal S}\mathcal R(u).
	}
\end{equation}
Indeed, $\mathcal G_1=\mathrm{Id}$ gives
\[
\mathcal R(u)
\le
\sup_{s>0}\mathcal R(\mathcal G_su),
\]
whereas invariance implies
\[
\sup_{s>0}\mathcal R(\mathcal G_su)
\le
\sup_{w\in\mathcal S}\mathcal R(w).
\]

Consequently,
\begin{equation}\label{eq:S-no-solutions-above}
	\boxed{
		F(\lambda,u)=0,\quad u\in\mathcal S
		\qquad\Longrightarrow\qquad
		\lambda\le
		\lambda_{\mathcal G,\mathcal S}^{**}.
	}
\end{equation}
Thus $\lambda_{\mathcal G,\mathcal S}^{**}$ is the maximal Rayleigh level
compatible with solutions in $\mathcal S$.

If an exact branch generated by $\phi\in\mathcal S$ satisfies
\begin{equation}\label{eq:phi-global-maximum}
	\lambda_\phi(s_*)
	=
	\max_{s>0}\lambda_\phi(s)
	=
	\lambda_{\mathcal G,\mathcal S}^{**},
\end{equation}
then the turning bifurcation point detected by
Theorem~\ref{thm:Rayleigh-bifurcation} occurs at the maximal Rayleigh
threshold of the class. In this case the same Rayleigh construction
identifies both the local turning mechanism and the global upper parameter
boundary.

\begin{remark}[Lower Rayleigh thresholds]
	The preceding construction is formulated for upper Rayleigh thresholds
	and therefore corresponds naturally to exact branches whose Rayleigh
	profiles attain a maximum. If instead
	\[
	\lambda_\phi''(s_*)>0,
	\]
	Theorem~\ref{thm:Rayleigh-bifurcation} detects a turning bifurcation at
	a local minimum. An analogous lower-threshold construction is obtained
	by replacing $\sup$ and $\max$ with $\inf$ and $\min$ and reversing the
	corresponding inequalities. Since the applications below concern upper
	existence thresholds, we use only the upper formulation.
\end{remark}

\begin{remark}[Relation with classical bifurcation methods]
	\label{rem:classical-bifurcation-methods}
	
	The exact Rayleigh reduction is complementary to classical local
	bifurcation theory. In implicit-function and Lyapunov--Schmidt
	approaches one usually starts from a known singular solution
	\[
	F(\lambda_*,u_*)=0,
	\qquad
	\ker F_u(\lambda_*,u_*)\neq\{0\},
	\]
	and then studies the local solution set near $(\lambda_*,u_*)$;
	see, for example,
	\cite{CrandallRabinowitz1971, Kielhofer2012}.
	Global and topological methods study continua and branching of solutions;
	see \cite{KrasnoselskiiZabreiko1984, Rabinowitz1971}, while numerical
	continuation detects singular points along computed solution branches
	\cite{DoedelKellerKernevez1991I,DoedelKellerKernevez1991II}.
	
	The exact Rayleigh reduction reverses this order when an appropriate
	transformation is available. It first constructs an exact solution branch
	and detects its critical parameter from
	\[
	\lambda_\phi'(s_*)=0.
	\]
	The resulting point may then be analyzed by classical local, global, or
	numerical bifurcation methods.
	
\end{remark}

\subsection{Connection with the classical fold criterion}
\label{subsec:Rayleigh-classical-fold}

The Rayleigh bifurcation criterion is geometric and does not require a
Fredholm decomposition or a Lyapunov--Schmidt reduction. Nevertheless, the
geometry of an exact Rayleigh branch naturally contains the classical fold
conditions when the required operator assumptions are available.

Assume that $F$ is of class $C^2$ and let
\[
u(s)=\mathcal G_s\phi,
\qquad
\lambda_\phi(s)=\mathcal R(\mathcal G_s\phi)
\]
be an exact branch:
\[
F(\lambda_\phi(s),u(s))=0.
\]
Differentiating with respect to $s$ gives
\begin{equation}\label{eq:exact-branch-first-derivative}
	F_\lambda(\lambda_\phi(s),u(s))\lambda_\phi'(s)
	+
	F_u(\lambda_\phi(s),u(s))u'(s)
	=
	0.
\end{equation}
At a critical point $s_*$,
\[
\lambda_\phi'(s_*)=0,
\]
and therefore
\begin{equation}\label{eq:Rayleigh-kernel-direction}
	\boxed{
		F_u(\lambda_\phi^*,u_*)u'(s_*)=0,
	}
\end{equation}
where
\[
u_*=\mathcal G_{s_*}\phi,
\qquad
\lambda_\phi^*=\lambda_\phi(s_*).
\]
Thus the tangent direction to the exact Rayleigh orbit automatically belongs
to the kernel of the state linearization. The kernel direction is therefore
a consequence of the Rayleigh reduction rather than an assumption used to
locate the critical point.

Differentiating the exact branch identity once more and using
$\lambda_\phi'(s_*)=0$ yields
\[
F_u(\lambda_\phi^*,u_*)u''(s_*)
+
D_{uu}F(\lambda_\phi^*,u_*)
[u'(s_*),u'(s_*)]
+
\lambda_\phi''(s_*)
F_\lambda(\lambda_\phi^*,u_*)
=
0.
\]
Let
\[
0\neq\psi
\in
\ker F_u(\lambda_\phi^*,u_*)^*.
\]
Pairing with $\psi$ eliminates the first term and gives
\begin{equation}\label{eq:Rayleigh-LS-identity}
	\boxed{
		\left\langle
		\psi,
		D_{uu}F(\lambda_\phi^*,u_*)
		[u'(s_*),u'(s_*)]
		\right\rangle
		=
		-
		\lambda_\phi''(s_*)
		\left\langle
		\psi,
		F_\lambda(\lambda_\phi^*,u_*)
		\right\rangle.
	}
\end{equation}

Suppose, in addition, that
\[
\ker F_u(\lambda_\phi^*,u_*)
=
\operatorname{span}\{u'(s_*)\},
\]
that $F_u(\lambda_\phi^*,u_*)$ is Fredholm of index zero, and that the
parameter transversality condition
\[
\left\langle
\psi,
F_\lambda(\lambda_\phi^*,u_*)
\right\rangle
\neq0
\]
holds. If
\[
\lambda_\phi''(s_*)\neq0,
\]
then \eqref{eq:Rayleigh-LS-identity} shows that
\[
\left\langle
\psi,
D_{uu}F(\lambda_\phi^*,u_*)
[u'(s_*),u'(s_*)]
\right\rangle
\neq0.
\]
Hence the quadratic Lyapunov--Schmidt coefficient is nonzero, and
$(\lambda_\phi^*,u_*)$ is a classical simple fold.

Thus exact Rayleigh reduction and classical fold theory provide two
different levels of information. The Rayleigh reduction constructs the
branch and detects its critical parameter; the classical fold conditions,
when available, identify the local singularity type of the detected point.

\section{Amplitude Rayleigh geometry on bounded domains}
\label{sec:Bounded-exact-Rayleigh}

We first illustrate the exact Rayleigh reduction by the weighted Kirchhoff
problem
\begin{equation}\label{eq:bounded-K}
	\begin{cases}
		-\left(
		a+\lambda\displaystyle\int_\Omega |\nabla u|^2\,dx
		\right)\Delta u
		=
		k(x)|u|^{q-2}u,
		&x\in\Omega,
		\\[2mm]
		u=0,
		&x\in\partial\Omega,
	\end{cases}
\end{equation}
where $\Omega\subset\mathbb R^N$ is a bounded smooth domain,
\[
a>0,
\qquad
k\in C(\overline\Omega),
\qquad
k(x)>0
\quad\text{on }\overline\Omega,
\]
and
\[
2<q<\min\{4,2^*\},
\qquad
2^*=
\begin{cases}
	\dfrac{2N}{N-2}, & N\ge3,\\
	+\infty, & N\le2.
\end{cases}
\]

Set
\[
A(u):=\int_\Omega|\nabla u|^2\,dx,
\qquad
B(u):=\int_\Omega k(x)|u|^q\,dx,
\]
and write
\[
F(\lambda,u)
:=
-\bigl(a+\lambda A(u)\bigr)\Delta u
-k(x)|u|^{q-2}u.
\]
Then \eqref{eq:bounded-K} is equivalent to
\[
F(\lambda,u)=0,
\qquad
u\in H_0^1(\Omega).
\]

Testing \eqref{eq:bounded-K} by $u\neq0$ gives
\[
\bigl(a+\lambda A(u)\bigr)A(u)=B(u),
\]
and hence every nontrivial solution satisfies the Nehari Rayleigh
representation
\begin{equation}\label{eq:parameter-Rayleigh-K}
	\boxed{
		\lambda
		=
		\mathcal R_{\mathcal N}(u)
		:=
		\frac{B(u)-aA(u)}{A(u)^2}.
	}
\end{equation}
We now show that, along distinguished amplitude directions, this scalar
parameter representation becomes the exact parameter law of a genuine
solution branch.

\subsection{Exact amplitude branches and turning bifurcations}
\label{subsec:bounded-explicit-turn}

Consider the amplitude action
\[
\mathcal G_s w=sw,
\qquad s>0,
\]
and define the orbital Rayleigh profile
\[
\lambda_w(s)
:=
\mathcal R_{\mathcal N}(sw).
\]
Since
\[
A(sw)=s^2A(w),
\qquad
B(sw)=s^qB(w),
\]
we obtain
\begin{equation}\label{eq:bounded-Rayleigh-general}
	\lambda_w(s)
	=
	\frac{B(w)}{A(w)^2}s^{q-4}
	-
	\frac{a}{A(w)}s^{-2}.
\end{equation}

Set
\begin{equation}\label{eq:tau-w}
	\tau_w:=\frac{B(w)}{A(w)}.
\end{equation}
Since $k>0$, we have $\tau_w>0$ for every $w\neq0$. Moreover,
\[
a+\lambda_w(s)A(sw)
=
\tau_ws^{q-2}.
\]
Substituting
\[
u=sw,
\qquad
\lambda=\lambda_w(s)
\]
into \eqref{eq:bounded-K}, we find that
\[
F(\lambda_w(s),sw)=0
\qquad\text{for all }s>0
\]
if and only if
\begin{equation}\label{eq:bounded-exact-profile}
	\boxed{
		\begin{cases}
			-\tau_w\Delta w
			=
			k(x)|w|^{q-2}w,
			&x\in\Omega,
			\\[1mm]
			w=0,
			&x\in\partial\Omega.
		\end{cases}
	}
\end{equation}

Thus \eqref{eq:bounded-exact-profile} is precisely the exactness condition
selecting those amplitude directions along which the Nehari Rayleigh
representation reconstructs an exact solution branch.

Writing
\[
w=\tau_w^{1/(q-2)}\phi,
\]
the exactness condition reduces to the normalized profile equation
\begin{equation}\label{eq:normalized-bounded}
	\boxed{
		\begin{cases}
			-\Delta\phi
			=
			k(x)|\phi|^{q-2}\phi,
			&x\in\Omega,
			\\[1mm]
			\phi=0,
			&x\in\partial\Omega.
		\end{cases}
	}
\end{equation}

Let $\phi\neq0$ solve \eqref{eq:normalized-bounded}. Testing by $\phi$
gives
\begin{equation}\label{eq:normalized-Nehari}
	A(\phi)=B(\phi)=:K_\phi.
\end{equation}
Hence the amplitude orbit
\[
\mathcal G_s\phi=s\phi,
\qquad s>0,
\]
generates the exact solution branch
\begin{equation}\label{eq:Gamma-phi}
	\Gamma_\phi
	=
	\left\{
	(\lambda_\phi(s),s\phi):s>0
	\right\},
\end{equation}
where
\begin{equation}\label{eq:bounded-branch}
	\boxed{
		\lambda_\phi(s)
		=
		\mathcal R_{\mathcal N}(s\phi)
		=
		\frac{s^{q-2}-a}{s^2K_\phi}.
	}
\end{equation}
In particular,
\[
F\bigl(\lambda_\phi(s),s\phi\bigr)=0,
\qquad s>0.
\]

The turning geometry is completely explicit. The equation
\[
\lambda_\phi'(s)=0
\]
has the unique solution
\begin{equation}\label{eq:bounded-sstar}
	\boxed{
		s_*^{q-2}
		=
		\frac{2a}{4-q}.
	}
\end{equation}
Thus the critical amplitude $s_*$ is independent not only of the
normalized profile $\phi$, but also of the weight $k(x)$. The spatial
heterogeneity affects the turning value only through $K_\phi$. Indeed,
\begin{equation}\label{eq:bounded-lambda-star}
	\boxed{
		\lambda_\phi^*
		:=
		\lambda_\phi(s_*)
		=
		\frac{q-2}{2K_\phi}s_*^{q-4}.
	}
\end{equation}
Moreover,
\begin{equation}\label{eq:bounded-second}
	\lambda_\phi''(s_*)
	=
	-\frac{(q-2)(4-q)}{K_\phi}
	s_*^{q-6}
	<0,
\end{equation}
so $s_*$ is a nondegenerate global maximum of the exact Rayleigh profile.

The remaining branch geometry follows directly from
\eqref{eq:bounded-branch}:
\[
\lambda_\phi(s)\to-\infty
\quad\text{as }s\downarrow0,
\qquad
\lambda_\phi(s)\to0^+
\quad\text{as }s\to\infty.
\]
The branch crosses $\lambda=0$ at the profile-independent amplitude
\begin{equation}\label{eq:bounded-zero-amplitude}
	s_{\rm z}:=a^{1/(q-2)},
\end{equation}
and
\[
s_{\rm z}<s_*.
\]
Consequently, $\Gamma_\phi$ contains exactly one solution for
$\lambda\le0$, exactly two solutions for
$0<\lambda<\lambda_\phi^*$, one turning solution for
$\lambda=\lambda_\phi^*$, and no solution for
$\lambda>\lambda_\phi^*$.

By Theorem~\ref{thm:Rayleigh-bifurcation},
\[
\boxed{
	(\lambda_\phi^*,s_*\phi)
}
\]
is a nondegenerate turning bifurcation point of the exact amplitude branch.

\begin{theorem}[Exact amplitude bifurcation branch]
	\label{thm:single-branch-geometry}
	
	Let $\phi\neq0$ solve the weighted normalized profile problem
	\eqref{eq:normalized-bounded}. Then
	\[
	\Gamma_\phi
	=
	\left\{
	(\lambda_\phi(s),s\phi):s>0
	\right\}
	\]
	is an exact solution branch of \eqref{eq:bounded-K}. It has a unique
	nondegenerate turning bifurcation point
	\[
	\boxed{
		(\lambda_\phi^*,s_*\phi),
		\qquad
		s_*
		=
		\left(
		\frac{2a}{4-q}
		\right)^{\frac1{q-2}},
	}
	\]
	where
	\[
	\lambda_\phi^*
	=
	\frac{q-2}{2K_\phi}s_*^{q-4},
	\qquad
	K_\phi
	=
	\int_\Omega|\nabla\phi|^2\,dx
	=
	\int_\Omega k(x)|\phi|^q\,dx.
	\]
	The branch contains exactly one solution for $\lambda\le0$, exactly
	two solutions for $0<\lambda<\lambda_\phi^*$, one turning solution
	for $\lambda=\lambda_\phi^*$, and no solution for
	$\lambda>\lambda_\phi^*$.
	
	In particular, the critical amplitude $s_*$ is universal: it is
	independent of both the normalized profile and the spatial weight
	$k(x)$.
\end{theorem}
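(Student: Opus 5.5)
The plan is to verify exactness directly, then reduce everything to the elementary calculus of the scalar function
\[
\lambda_\phi(s)=\frac{s^{q-2}-a}{K_\phi}\,s^{-2}=\frac{1}{K_\phi}\bigl(s^{q-4}-as^{-2}\bigr),
\]
and finally invoke Theorem~\ref{thm:Rayleigh-bifurcation}.

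\textbf{Exactness.} Let $\phi\neq0$ solve \eqref{eq:normalized-bounded}. Testing by $\phi$ gives $A(\phi)=B(\phi)=K_\phi>0$; positivity uses $\phi\neq0$. Put $u=s\phi$ and $\lambda=\lambda_\phi(s)$. Then
\[
a+\lambda A(u)=a+\frac{s^{q-2}-a}{s^2K_\phi}\,s^2K_\phi=s^{q-2}.
\]
Hence the left side of \eqref{eq:bounded-K} equals $-s^{q-2}\cdot s\,\Delta\phi=s^{q-1}k|\phi|^{q-2}\phi$. This equals $k|u|^{q-2}u$, so $F(\lambda_\phi(s),s\phi)=0$ for all $s>0$. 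Thus $\Gamma_\phi$ is an exact branch.

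\textbf{Calculus of the profile.} Compute
\[
K_\phi\lambda_\phi'(s)=(q-4)s^{q-5}+2as^{-3}=s^{-3}\bigl(2a-(4-q)s^{q-2}\bigr).
\]
Since $2<q<4$, the bracket is strictly decreasing in $s$. It therefore vanishes exactly at $s_*^{q-2}=2a/(4-q)$, is positive before $s_*$ and negative after. So $\lambda_\phi$ increases strictly on $(0,s_*]$ and decreases strictly on $[s_*,\infty)$.

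Substituting $s_*^{q-2}-a=a(q-2)/(4-q)=\frac{q-2}{2}s_*^{q-2}$ gives the stated value of $\lambda_\phi^*$. Differentiating once more at $s_*$ yields \eqref{eq:bounded-second}, which is negative. This gives $\lambda_\phi'(s_*)=0$ and $\lambda_\phi''(s_*)<0$, with $\lambda_\phi\in C^\infty(0,\infty)$.

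The orbit $s\mapsto s\phi$ is $C^1$ with derivative $\phi\neq0$, so it is regular. Theorem~\ref{thm:Rayleigh-bifurcation} then shows that $(\lambda_\phi^*,s_*\phi)$ is a nondegenerate turning point. It is unique because $s_*$ is the only critical point.

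\textbf{Counting.} The branch map $s\mapsto s\phi$ is injective, so distinct $s$ give distinct solutions. Counting solutions on $\Gamma_\phi$ at level $\lambda$ is therefore the same as counting roots of $\lambda_\phi(s)=\lambda$. The relevant behaviour of $\lambda_\phi$ is:
\[
\lambda_\phi(s)\to-\infty \text{ as } s\downarrow0,\qquad \lambda_\phi(s)\to0^+ \text{ as } s\to\infty,\qquad \lambda_\phi(s)=0 \iff s=s_{\rm z}=a^{1/(q-2)}<s_*.
\]
The limits hold because $q-4<0$ and $-as^{-2}$ dominates near $0$, while $s^{q-4}$ dominates at infinity. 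The sign condition holds because $\lambda_\phi>0$ exactly when $s>s_{\rm z}$.

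By strict monotonicity on each side of $s_*$ and continuity:
\begin{itemize}
\item On $(0,s_*]$ the profile maps bijectively onto $(-\infty,\lambda_\phi^*]$.
\item On $[s_*,\infty)$ it maps bijectively onto $(0,\lambda_\phi^*]$, with positive values only.
\end{itemize}
Hence there is exactly one root for $\lambda\le0$ (from the left piece), exactly two for $0<\lambda<\lambda_\phi^*$, exactly one at $\lambda_\phi^*$, and none above it.

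Universality of $s_*$ is read off from its formula, which involves only $a$ and $q$. No step seems a real obstacle. The only point needing care is the counting for $\lambda\le0$, which requires that the right decreasing piece stays strictly positive; that follows from $s_{\rm z}<s_*$ together with the limit $0^+$.
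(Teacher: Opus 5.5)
Your proposal is correct and follows essentially the paper's route. Both verify exactness through $a+\lambda_\phi(s)A(s\phi)=s^{q-2}$, then analyze the explicit profile $\lambda_\phi(s)=(s^{q-2}-a)/(s^2K_\phi)$: its unique critical point $s_*$, the negative second derivative, the limits, and the zero crossing at $s_{\rm z}<s_*$. Both then count the roots and invoke Theorem~\ref{thm:Rayleigh-bifurcation}. The differences are minor: you check exactness directly for the normalized $\phi$ instead of deriving the general equivalence for arbitrary amplitude directions $w$, and you state explicitly the injectivity of $s\mapsto s\phi$, which the paper's counting uses only implicitly.
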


\subsection{The maximal Rayleigh branch}
\label{subsec:bounded-nehari-extremal}

We next identify which normalized profile generates the largest turning
value. Let
\[
\mathcal S=H_0^1(\Omega)\setminus\{0\}.
\]
The upper Rayleigh threshold of the amplitude class is
\begin{equation}\label{eq:bounded-class-threshold}
	\lambda_{\mathcal G,\mathcal S}^{**}
	:=
	\sup_{w\in H_0^1(\Omega)\setminus\{0\}}
	\sup_{s>0}
	\mathcal R_{\mathcal N}(sw).
\end{equation}

For fixed $w\neq0$, the orbital profile
\eqref{eq:bounded-Rayleigh-general} has the unique critical point
\begin{equation}\label{eq:bounded-s-star-general}
	s_w^{q-2}
	=
	\frac{2aA(w)}
	{(4-q)B(w)}.
\end{equation}
Since
\[
\lambda_w(s)\to-\infty
\quad\text{as }s\downarrow0,
\qquad
\lambda_w(s)\to0^+
\quad\text{as }s\to\infty,
\]
this point is its unique global maximum. Substitution into
\eqref{eq:bounded-Rayleigh-general} gives
\begin{equation}\label{eq:bounded-profile-maximum}
	\boxed{
		\max_{s>0}\lambda_w(s)
		=
		C_{a,q}
		\left(
		\frac{B(w)^{2/q}}{A(w)}
		\right)^{\frac{q}{q-2}},
	}
\end{equation}
where
\begin{equation}\label{eq:Caq}
	C_{a,q}
	:=
	\frac{q-2}{2}
	\left(
	\frac{2a}{4-q}
	\right)^{\frac{q-4}{q-2}}.
\end{equation}

Introduce the optimal weighted Sobolev quotient
\begin{equation}\label{eq:bounded-Sobolev-constant}
	S_{q,k}
	:=
	\inf_{w\in H_0^1(\Omega)\setminus\{0\}}
	\frac{A(w)}
	{B(w)^{2/q}}.
\end{equation}
Then \eqref{eq:bounded-profile-maximum} yields
\begin{equation}\label{eq:bounded-threshold-explicit}
	\boxed{
		\lambda_{\mathcal G,\mathcal S}^{**}
		=
		C_{a,q}
		S_{q,k}^{-\frac{q}{q-2}}.
	}
\end{equation}

Since $q<2^*$ and $\Omega$ is bounded, the compact embedding
\[
H_0^1(\Omega)\hookrightarrow L^q(\Omega)
\]
and the boundedness of $k$ imply that the infimum in
\eqref{eq:bounded-Sobolev-constant} is attained. Let $w_0$ be a minimizer
normalized by
\[
B(w_0)=1.
\]
Then
\[
A(w_0)=S_{q,k},
\]
and its Euler--Lagrange equation is
\begin{equation}\label{eq:w0-Euler-Lagrange}
	-\Delta w_0
	=
	S_{q,k}\,
	k(x)|w_0|^{q-2}w_0.
\end{equation}

Define
\begin{equation}\label{eq:phi0-normalization}
	\phi_0
	:=
	S_{q,k}^{1/(q-2)}w_0.
\end{equation}
Then $\phi_0$ solves the weighted normalized profile equation
\eqref{eq:normalized-bounded}. Moreover,
\begin{equation}\label{eq:Kphi0}
	K_{\phi_0}
	=
	A(\phi_0)
	=
	B(\phi_0)
	=
	S_{q,k}^{\frac{q}{q-2}}.
\end{equation}
Consequently,
\[
\lambda_{\phi_0}^*
=
\frac{q-2}{2}
s_*^{q-4}
S_{q,k}^{-\frac{q}{q-2}}
=
C_{a,q}S_{q,k}^{-\frac{q}{q-2}},
\]
and comparison with \eqref{eq:bounded-threshold-explicit} gives
\begin{equation}\label{eq:bounded-threshold-attained}
	\boxed{
		\lambda_{\phi_0}^*
		=
		\lambda_{\mathcal G,\mathcal S}^{**}.
	}
\end{equation}

Moreover, $\phi_0$ is a least-energy nontrivial solution of the weighted
normalized profile equation. Indeed, if $\phi\neq0$ solves
\eqref{eq:normalized-bounded}, then
\[
A(\phi)=B(\phi),
\]
and hence, by the definition of $S_{q,k}$,
\[
S_{q,k}
\le
\frac{A(\phi)}{B(\phi)^{2/q}}
=
A(\phi)^{\frac{q-2}{q}}.
\]
Therefore
\[
A(\phi)
\ge
S_{q,k}^{\frac{q}{q-2}}
=
A(\phi_0).
\]
Since
\[
J_{\Omega,k}(\phi)
=
\frac12A(\phi)-\frac1qB(\phi)
=
\frac{q-2}{2q}A(\phi)
\]
on every normalized profile, $\phi_0$ has the least nontrivial critical
energy.

Thus the weighted ground-state profile $\phi_0$ generates the maximal exact
amplitude branch, and
\begin{equation}\label{eq:maximal-bifurcation-point}
	\boxed{
		\left(
		\lambda_{\mathcal G,\mathcal S}^{**},
		s_*\phi_0
		\right)
	}
\end{equation}
is its nondegenerate turning bifurcation point.

Moreover, this threshold is global. Indeed, every nontrivial solution of
\eqref{eq:bounded-K} satisfies
\[
\lambda
=
\mathcal R_{\mathcal N}(u)
\le
\lambda_{\mathcal G,\mathcal S}^{**}.
\]
Hence no nontrivial solution exists for
\[
\lambda>\lambda_{\mathcal G,\mathcal S}^{**}.
\]

\begin{corollary}[Maximal amplitude bifurcation branch]
	\label{cor:bounded-maximal-branch}
	
	The weighted ground-state profile $\phi_0$ generates the maximal exact
	amplitude branch, and its turning value satisfies
	\[
	\boxed{
		\lambda_{\phi_0}^*
		=
		\lambda_{\mathcal G,\mathcal S}^{**}
		=
		C_{a,q}S_{q,k}^{-\frac{q}{q-2}}.
	}
	\]
	In particular,
	\[
	\boxed{
		\left(
		\lambda_{\mathcal G,\mathcal S}^{**},
		s_*\phi_0
		\right)
	}
	\]
	is a nondegenerate turning bifurcation point at the global upper
	existence threshold for nontrivial solutions.
\end{corollary}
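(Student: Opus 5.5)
The proof collects the facts established just before the statement into four steps.

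\emph{Step 1: $\phi_0$ is an admissible normalized profile.} The infimum $S_{q,k}$ in \eqref{eq:bounded-Sobolev-constant} is attained. This follows from the compact embedding $H_0^1(\Omega)\hookrightarrow L^q(\Omega)$ for $q<2^*$, together with $0<\min k\le k\le\max k$. Given a minimizing sequence normalized by $B=1$, it is bounded in $H_0^1$. A subsequence converges weakly in $H_0^1$ and strongly in $L^q$, so $B(w_0)=1$, and weak lower semicontinuity of $A$ gives $A(w_0)=S_{q,k}$. The Lagrange multiplier rule gives \eqref{eq:w0-Euler-Lagrange}. The multiplier is identified by testing the Euler--Lagrange equation with $w_0$, which yields $A(w_0)=\mu B(w_0)$, so $\mu=S_{q,k}$. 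Homogeneity of degree $q-1$ of the nonlinearity then shows that $\phi_0=S_{q,k}^{1/(q-2)}w_0$ solves \eqref{eq:normalized-bounded}, and \eqref{eq:Kphi0} follows by direct computation.

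\emph{Step 2: the turning value and the threshold.} With $\phi_0$ admissible, Theorem~\ref{thm:single-branch-geometry} applies. It shows that $\Gamma_{\phi_0}$ is an exact branch with the nondegenerate turning point $(\lambda_{\phi_0}^*,s_*\phi_0)$. Inserting $K_{\phi_0}=S_{q,k}^{q/(q-2)}$ into \eqref{eq:bounded-lambda-star} gives $\lambda_{\phi_0}^*=C_{a,q}S_{q,k}^{-q/(q-2)}$. Separately, for each $w\neq0$ the orbital profile \eqref{eq:bounded-Rayleigh-general} is maximized at $s_w$, with maximal value \eqref{eq:bounded-profile-maximum}. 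This is an elementary one-variable computation in which only the exponent bookkeeping needs care. Since the map $t\mapsto t^{q/(q-2)}$ is increasing, taking the supremum over $w$ replaces $B^{2/q}/A$ by $S_{q,k}^{-1}$. This yields \eqref{eq:bounded-threshold-explicit} and hence \eqref{eq:bounded-threshold-attained}.

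\emph{Step 3: maximality among exact branches.} For any normalized profile $\phi$, the Nehari identity $A(\phi)=B(\phi)$ combined with the definition of $S_{q,k}$ gives $K_\phi\ge K_{\phi_0}$. Therefore $\lambda_\phi^*\le\lambda_{\phi_0}^*$, so $\phi_0$ generates the maximal branch.

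\emph{Step 4: globality of the threshold.} Any nontrivial solution $u$ of \eqref{eq:bounded-K} satisfies $\lambda=\mathcal R_{\mathcal N}(u)$. Since $\mathcal S=H_0^1(\Omega)\setminus\{0\}$ is invariant under amplitude scaling, \eqref{eq:G-S-second-threshold-sup} and \eqref{eq:S-no-solutions-above} give $\lambda\le\lambda_{\mathcal G,\mathcal S}^{**}$. Hence no nontrivial solution exists above this value, and the turning point of $\Gamma_{\phi_0}$ lies exactly at the global existence threshold.

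The main obstacle is Step 1, namely attainment of $S_{q,k}$ and the correct identification of the multiplier. Everything after that is explicit algebra on the formulas already derived.
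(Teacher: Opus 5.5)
Your proposal is correct and follows essentially the same route as the paper: the orbital maximum \eqref{eq:bounded-profile-maximum} gives $\lambda_{\mathcal G,\mathcal S}^{**}=C_{a,q}S_{q,k}^{-q/(q-2)}$, and the rescaled minimizer $\phi_0=S_{q,k}^{1/(q-2)}w_0$, with $K_{\phi_0}=S_{q,k}^{q/(q-2)}$, attains it. Maximality then comes from the Nehari identity $A(\phi)=B(\phi)$, which gives $K_\phi\ge K_{\phi_0}$, and globality from $\lambda=\mathcal R_{\mathcal N}(u)\le\lambda_{\mathcal G,\mathcal S}^{**}$. The differences are cosmetic: you spell out the direct-method attainment and the identification of the multiplier, which the paper only asserts, and you cite the abstract threshold bound where $\mathcal G_1=\mathrm{Id}$ alone would suffice.
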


\subsection{Variational hierarchy of amplitude bifurcations}
\label{subsec:countable-Rayleigh-branches}

The exact amplitude reduction is not restricted to the weighted
ground-state profile $\phi_0$. Every nontrivial solution of
\eqref{eq:normalized-bounded} generates its own exact Rayleigh branch.
The variational multiplicity of the weighted normalized problem therefore
produces a countable hierarchy of amplitude bifurcations for the Kirchhoff
equation.

Consider the even functional
\[
J(u)
=
\frac12A(u)-\frac1qB(u),
\qquad
u\in H_0^1(\Omega).
\]
Its nontrivial critical points are precisely the weak solutions of
\eqref{eq:normalized-bounded}. Since $2<q<2^*$, the compact embedding
\[
H_0^1(\Omega)\hookrightarrow L^q(\Omega)
\]
and the boundedness of $k$ imply the Palais--Smale compactness condition
for $J$. Moreover, since
$k\in C(\overline\Omega)$ and $k>0$ on $\overline\Omega$, there exists
$k_0>0$ such that
\[
k(x)\ge k_0
\qquad\text{on }\overline\Omega,
\]
which yields the standard symmetric superlinear geometry. Hence the
symmetric critical point theorem gives an unbounded sequence of critical
values; see, for example,
\cite[Theorem~9.38]{Rabinowitz1986}.

As shown above, $\phi_0$ is a least-energy nontrivial critical point.
We may therefore choose $\phi_1=\phi_0$ and select the remaining profiles
from an unbounded variational sequence. Accordingly, there exists a
sequence of pairwise distinct antipodal solution pairs
\[
\{\pm\phi_j\}_{j\ge1}
\]
such that
\[
\phi_1=\phi_0,
\qquad
c_j:=J(\phi_j),
\qquad
0<c_1\le c_2\le\cdots,
\qquad
c_j\to+\infty.
\]

Testing \eqref{eq:normalized-bounded} by $\phi_j$ gives
\[
A(\phi_j)
=
B(\phi_j)
=
:K_j.
\]
Therefore
\begin{equation}\label{eq:cj-Kj}
	c_j
	=
	\frac{q-2}{2q}K_j,
	\qquad
	K_j
	=
	\frac{2q}{q-2}c_j
	\longrightarrow+\infty.
\end{equation}

Each antipodal profile pair $\{\pm\phi_j\}$ generates the pair of exact
amplitude branches
\[
\Gamma_j^\pm
=
\left\{
(\lambda_j(s),\pm s\phi_j):s>0
\right\},
\]
where
\begin{equation}\label{eq:lambda-j}
	\lambda_j(s)
	=
	\frac{s^{q-2}-a}{s^2K_j}.
\end{equation}
Since both the normalized equation and the Kirchhoff equation are odd in
$u$, the two branches have the same Rayleigh profile $\lambda_j(s)$.
In what follows, solutions are counted in antipodal pairs.

All these branch pairs have the same critical amplitude
\[
s_*^{q-2}
=
\frac{2a}{4-q},
\]
independent of both the profile and the weight $k(x)$. Their turning values
depend on the profiles only through $K_j$:
\begin{equation}\label{eq:lambda-j-star}
	\boxed{
		\lambda_j^*
		=
		\frac{q-2}{2K_j}s_*^{q-4}
		=
		\frac{(q-2)^2}{4q}
		s_*^{q-4}\frac1{c_j}.
	}
\end{equation}
Since $c_j$ is nondecreasing and tends to infinity,
\begin{equation}\label{eq:lambda-j-order}
	\boxed{
		\lambda_1^*
		=
		\lambda_{\mathcal G,\mathcal S}^{**}
		\ge
		\lambda_2^*
		\ge
		\cdots
		\longrightarrow0.
	}
\end{equation}

Thus the global upper Rayleigh threshold is the first element of a
countable hierarchy of turning bifurcation values accumulating at
$\lambda=0$.

\begin{theorem}[Countable hierarchy of amplitude bifurcations]
	\label{thm:infinite-scaling-hierarchy}
	
	Problem \eqref{eq:bounded-K} possesses infinitely many distinct
	antipodal pairs of exact amplitude branches
	\[
	\Gamma_j^\pm,
	\qquad
	j\ge1.
	\]
	The branches $\Gamma_j^\pm$ have the antipodal nondegenerate turning
	bifurcation points
	\[
	(\lambda_j^*,\pm s_*\phi_j),
	\]
	and the corresponding bifurcation values satisfy
	\[
	\boxed{
		\lambda_{\mathcal G,\mathcal S}^{**}
		=
		\lambda_1^*
		\ge
		\lambda_2^*
		\ge
		\cdots
		\longrightarrow0.
	}
	\]
	In particular, $\lambda_1^*$ is the global upper existence threshold
	for nontrivial solutions.
	
	More precisely:
	\begin{enumerate}[label=\textup{(\roman*)}]
		\item for every $\lambda\le0$, problem
		\eqref{eq:bounded-K} possesses infinitely many distinct antipodal
		pairs of nontrivial solutions;
		
		\item for every $m\in\mathbb N$ and
		\[
		0<\lambda<\lambda_m^*,
		\]
		problem \eqref{eq:bounded-K} possesses at least $2m$ distinct
		antipodal pairs of nontrivial solutions;
		
		\item for
		\[
		\lambda>
		\lambda_1^*
		=
		\lambda_{\mathcal G,\mathcal S}^{**},
		\]
		problem \eqref{eq:bounded-K} has no nontrivial solutions.
	\end{enumerate}
\end{theorem}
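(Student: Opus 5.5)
The plan is to assemble the statement from the single-branch analysis of Theorem~\ref{thm:single-branch-geometry}, the variational sequence $\{\pm\phi_j\}$ constructed above, and the global Rayleigh bound. First, for each $j$, Theorem~\ref{thm:single-branch-geometry} applied to $\pm\phi_j$ shows that $\Gamma_j^\pm$ are exact branches of \eqref{eq:bounded-K}. Their turning points are $(\lambda_j^*,\pm s_*\phi_j)$, and these are nondegenerate by \eqref{eq:bounded-second} and Theorem~\ref{thm:Rayleigh-bifurcation}. The ordering $\lambda_1^*\ge\lambda_2^*\ge\cdots\to0$ follows from \eqref{eq:lambda-j-star}, since $c_j$ is nondecreasing and tends to infinity. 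The identity $\lambda_1^*=\lambda^{**}_{\mathcal G,\mathcal S}$ is Corollary~\ref{cor:bounded-maximal-branch}, because $\phi_1=\phi_0$. Part (iii) is the global bound: every nontrivial solution satisfies $\lambda=\mathcal R_{\mathcal N}(u)\le\lambda^{**}_{\mathcal G,\mathcal S}$, by \eqref{eq:S-no-solutions-above} applied with the invariant class $\mathcal S=H_0^1(\Omega)\setminus\{0\}$.

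The main obstacle is to show that solutions coming from \emph{different} profiles are distinct, and also that the two solutions on one branch are distinct. Within one branch, the points $s\phi_j$ with $s\neq s'$ differ because $\phi_j\neq0$. Across branches, I would argue as follows. Suppose $s\phi_i=\pm s'\phi_j$. Since both $\phi_i$ and $\phi_j$ solve \eqref{eq:normalized-bounded}, we get $\phi_i=t\phi_j$ with $t=\pm s'/s$. Then
\[
-\Delta\phi_i=t\,k|\phi_j|^{q-2}\phi_j=t^{2-q}\,\cdot\, t\,k|\phi_i|^{q-2}\phi_i\cdot t^{0},
\]
more simply $-\Delta(t\phi_j)=k|t\phi_j|^{q-2}t\phi_j$ forces $|t|^{q-2}=1$. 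Hence $t=\pm1$, which contradicts the pairwise distinctness of the antipodal pairs. So distinct profile pairs give disjoint branch pairs, and solutions counted in antipodal pairs on different branches never coincide.

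Counting then follows. For (i), fix $\lambda\le0$. Each branch pair $\Gamma_j^\pm$ contains exactly one antipodal pair at level $\lambda$, by Theorem~\ref{thm:single-branch-geometry}. Since there are infinitely many $j$ and the branches are disjoint, we obtain infinitely many distinct pairs. For (ii), fix $m$ and $0<\lambda<\lambda_m^*$. Monotonicity gives $\lambda<\lambda_j^*$ for all $j\le m$. Each such branch contributes two antipodal pairs, namely $\pm s_-\phi_j$ and $\pm s_+\phi_j$ with $s_-<s_*<s_+$. By the disjointness step these $2m$ pairs are pairwise distinct. This yields at least $2m$ pairs, which completes the proof.
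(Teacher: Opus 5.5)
Your proposal is correct and follows essentially the same route as the paper's proof. It uses the same rescaling argument forcing $|t|^{q-2}=1$ for distinctness of the branch pairs, the ordering from \eqref{eq:lambda-j-star}, the single-branch counting for (i)--(ii), and the global bound $\lambda=\mathcal R_{\mathcal N}(u)\le\lambda^{**}_{\mathcal G,\mathcal S}$ for (iii). You should delete the first chain of equalities in your distinctness step, which is incorrect (it should read $|t|^{2-q}\,k|\phi_i|^{q-2}\phi_i$), since the ``more simply'' computation that follows is the correct one.
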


\begin{proof}
	We first show that distinct antipodal profile pairs generate distinct
	antipodal branch pairs. Suppose that, for some $i\neq j$ and $s,r>0$,
	\[
	s\phi_i=\pm r\phi_j.
	\]
	Then
	\[
	\phi_i=\pm\alpha\phi_j,
	\qquad
	\alpha:=\frac{r}{s}>0.
	\]
	Since both profiles solve \eqref{eq:normalized-bounded}, substitution
	of $\phi_i=\pm\alpha\phi_j$ gives
	\[
	\alpha=\alpha^{q-1}.
	\]
	Because $q>2$, we obtain $\alpha=1$, and hence
	\[
	\{\pm\phi_i\}
	=
	\{\pm\phi_j\},
	\]
	a contradiction. Thus the branch pairs $\Gamma_j^\pm$ are mutually
	distinct.
	
	Formula \eqref{eq:lambda-j-star} and $c_j\to+\infty$ imply
	\[
	\lambda_j^*\to0,
	\]
	while the ordering of $c_j$ gives
	\[
	\lambda_1^*
	\ge
	\lambda_2^*
	\ge
	\cdots.
	\]
	Since $\phi_1=\phi_0$, equality
	\[
	\lambda_1^*
	=
	\lambda_{\mathcal G,\mathcal S}^{**}
	\]
	follows from \eqref{eq:bounded-threshold-attained}.
	
	For every $j$,
	\[
	\lambda_j''(s_*)<0,
	\qquad
	\left.
	\frac{d}{ds}\mathcal G_s\phi_j
	\right|_{s=s_*}
	=
	\phi_j\neq0.
	\]
	Hence Theorem~\ref{thm:Rayleigh-bifurcation} shows that
	\[
	(\lambda_j^*,s_*\phi_j)
	\]
	is a nondegenerate turning bifurcation point of $\Gamma_j^+$.
	By odd symmetry,
	\[
	(\lambda_j^*,-s_*\phi_j)
	\]
	is the corresponding turning bifurcation point of $\Gamma_j^-$.
	
	Let $\lambda\le0$. By
	Theorem~\ref{thm:single-branch-geometry}, for each $j$ there is exactly
	one amplitude $s>0$ at the level $\lambda$. Hence the branch pair
	$\Gamma_j^\pm$ yields exactly one antipodal pair
	\[
	\{\pm s\phi_j\}
	\]
	of nontrivial solutions. Since the branch pairs are mutually distinct,
	there are infinitely many such solution pairs. This proves
	\textup{(i)}.
	
	Now let $m\in\mathbb N$ and
	\[
	0<\lambda<\lambda_m^*.
	\]
	For every $j\le m$,
	\[
	\lambda
	<
	\lambda_m^*
	\le
	\lambda_j^*.
	\]
	Hence the scalar profile $\lambda_j(s)$ has two distinct amplitudes
	\[
	s_j^-(\lambda)<s_*<s_j^+(\lambda)
	\]
	at the level $\lambda$. These amplitudes generate the two distinct
	antipodal solution pairs
	\[
	\{\pm s_j^-(\lambda)\phi_j\},
	\qquad
	\{\pm s_j^+(\lambda)\phi_j\}.
	\]
	Thus each of the first $m$ profile pairs contributes two distinct
	antipodal pairs of solutions, and \eqref{eq:bounded-K} has at least
	$2m$ such pairs. This proves \textup{(ii)}.
	
	Finally, suppose
	\[
	\lambda>
	\lambda_1^*
	=
	\lambda_{\mathcal G,\mathcal S}^{**}.
	\]
	By the definition of the maximal Rayleigh threshold,
	\[
	\mathcal R_{\mathcal N}(u)
	\le
	\lambda_{\mathcal G,\mathcal S}^{**}
	<
	\lambda
	\qquad
	\text{for every }u\neq0.
	\]
	Every nontrivial solution of \eqref{eq:bounded-K}, however, must satisfy
	\[
	\lambda
	=
	\mathcal R_{\mathcal N}(u)
	=
	\frac{B(u)-aA(u)}{A(u)^2},
	\]
	which is impossible. This proves \textup{(iii)}.
\end{proof}

Thus the unbounded variational hierarchy of the weighted normalized
profile problem is transformed by the exact amplitude Rayleigh reduction
into a countable hierarchy of explicit turning bifurcation values,
\[
\lambda_j^*\longrightarrow0.
\]
The weighted ground-state branch determines the global upper threshold,
while higher variational profiles generate successively smaller
bifurcation levels. The spatial heterogeneity $k(x)$ changes the profile
energies and hence the individual thresholds $\lambda_j^*$, but leaves the
universal critical amplitude $s_*$ unchanged.

\begin{figure}[t]
	\centering
	\includegraphics[width=.82\textwidth]{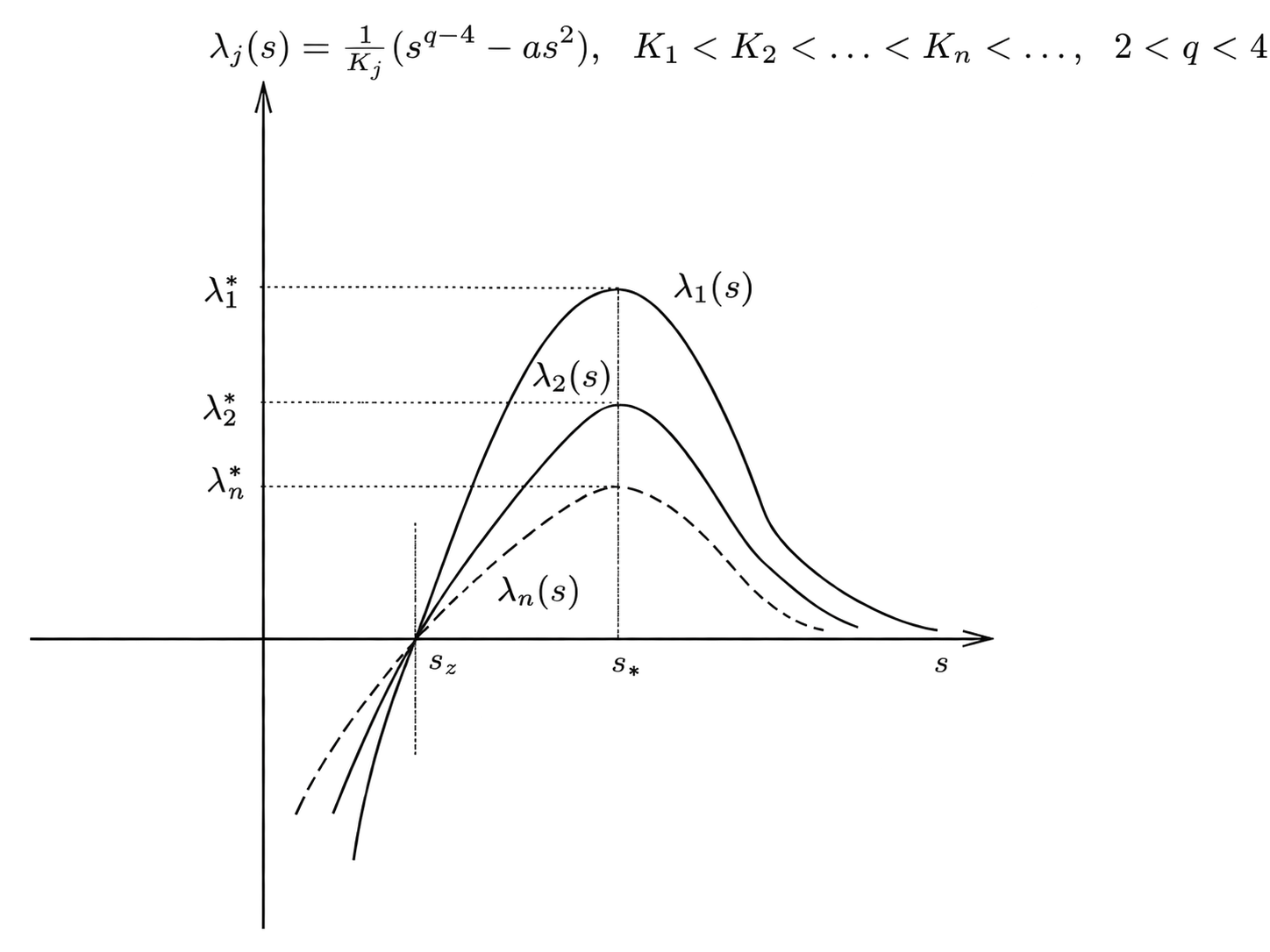}
	\caption{
		Exact amplitude Rayleigh profiles for representative normalized profiles.
		All branches have the same zero-crossing amplitude $s_{\rm z}$ and the
		same critical amplitude $s_*$. Their turning bifurcation values
		$\lambda_j^*$ are determined by the profile levels $K_j$ of the
		weighted normalized problem and accumulate at $\lambda=0$.
	}
	\label{fig:bounded-branch-geometry}
\end{figure}


\section{Exact spatial Rayleigh reduction for generalized Kirchhoff equations}
\label{sec:kirchhoff-exact-Rayleigh}

The variational theory of nonlinear scalar-field equations on
$\mathbb R^N$ goes back, in particular, to the pioneering work of
Strauss \cite{Strauss1977}. Under the nearly optimal hypotheses developed
by Berestycki and Lions
\cite{BerestyckiLions1983I,BerestyckiLions1983II}, the scalar-field equation
\[
-\Delta\phi=g(\phi)
\qquad\text{in }\mathbb R^N
\]
admits a positive radial ground state and an infinite sequence of distinct
radial bound states.

We consider the corresponding Kirchhoff generalization
\begin{equation}\label{eq:general-K}
	-\left(
	a+b
	\left(
	\int_{\mathbb R^N}|\nabla u|^2\,dx
	\right)^\theta
	\right)\Delta u
	=
	g(u)
	\qquad\text{in }\mathbb R^N,
\end{equation}
where
\[
N\ge3,
\qquad
a>0,\quad b\ge0,\quad \theta>0.
\]
The standard Kirchhoff law corresponds to $\theta=1$.

We assume that $g\in C(\mathbb R,\mathbb R)$ is odd and satisfies the
Berestycki--Lions hypotheses
\begin{align}
	-\infty
	&<
	\liminf_{s\to0^+}\frac{g(s)}{s}
	\le
	\limsup_{s\to0^+}\frac{g(s)}{s}
	=
	-m
	<0,
	\label{eq:BL-g1}
	\\
	-\infty
	&\le
	\limsup_{s\to+\infty}
	\frac{g(s)}{s^{(N+2)/(N-2)}}
	\le0,
	\label{eq:BL-g2}
\end{align}
and that there exists $\zeta>0$ such that
\begin{equation}\label{eq:BL-g3}
	G(\zeta)>0,
	\qquad
	G(t):=\int_0^t g(\tau)\,d\tau.
\end{equation}
The power model is recovered by taking
\[
g(t)=|t|^{p-2}t-\mu t,
\qquad
\mu>0,
\qquad
2<p<2^*:=\frac{2N}{N-2}.
\]
Thus the negative linear behavior of $g$ at the origin incorporates the
mass term of the usual scalar-field equation.

Set
\[
A(u)
:=
\int_{\mathbb R^N}|\nabla u|^2\,dx,
\qquad
\mathcal B(u)
:=
\int_{\mathbb R^N}G(u)\,dx,
\]
whenever the latter integral is finite, and define
\[
\mathcal D_G
:=
\left\{
u\in H^1(\mathbb R^N)\setminus\{0\}:
\mathcal B(u)\in\mathbb R
\right\}.
\]
The set $\mathcal D_G$ is invariant under spatial dilations.
Throughout this section, by a nontrivial solution we mean a finite-energy
solution $u\in\mathcal D_G$; whenever the Pohozaev identity is used, the
standard regularity required for its validity is understood.

Then \eqref{eq:general-K} takes the form
\[
-\bigl(a+bA(u)^\theta\bigr)\Delta u=g(u).
\]
We write it as
\[
F_\theta(b,u)=0,
\]
where
\begin{equation}\label{eq:general-F}
	F_\theta(b,u)
	:=
	-\bigl(a+bA(u)^\theta\bigr)\Delta u
	-g(u).
\end{equation}

The associated energy functional on $\mathcal D_G$ is
\begin{equation}\label{eq:general-energy}
	E_{b,\theta}(u)
	=
	\frac a2A(u)
	+
	\frac{b}{2(\theta+1)}A(u)^{\theta+1}
	-
	\mathcal B(u).
\end{equation}

The crucial feature of \eqref{eq:general-K} is that spatial dilation acts
on the nonlocal coefficient through a fixed homogeneity law, whereas no
homogeneity assumption on $g$ is required. This makes it possible to
separate the geometry of the exact Kirchhoff branches from the particular
form of the local nonlinearity.

\subsection{Pohozaev parameter representation}
\label{subsec:general-Pohozaev-functional}

We first derive the scalar parameter relation naturally associated with
spatial dilation. For a fixed solution $u$, the quantity
\[
\kappa_u:=a+bA(u)^\theta
\]
is a positive constant, and \eqref{eq:general-K} becomes
\[
-\kappa_u\Delta u=g(u).
\]
Hence every sufficiently regular nontrivial solution for which the
Pohozaev identity is valid satisfies
\begin{equation}\label{eq:general-Pohozaev-identity}
	\frac{N-2}{2}
	\bigl(
	aA(u)+bA(u)^{\theta+1}
	\bigr)
	-
	N\mathcal B(u)
	=
	0.
\end{equation}
Solving for $b$ gives the Pohozaev Rayleigh functional on
$\mathcal D_G$,
\begin{equation}\label{eq:general-Pohozaev-Rayleigh}
	\boxed{
		\mathcal R_{\mathcal P,\theta}(u)
		:=
		\frac{
			2N\mathcal B(u)
			-
			(N-2)aA(u)
		}{
			(N-2)A(u)^{\theta+1}
		}.
	}
\end{equation}
Thus every sufficiently regular nontrivial solution satisfies
\begin{equation}\label{eq:general-Pohozaev-parameter}
	\boxed{
		b
		=
		\mathcal R_{\mathcal P,\theta}(u).
	}
\end{equation}

As in the abstract setting, this identity alone is only a necessary scalar
constraint:
\[
b=\mathcal R_{\mathcal P,\theta}(u)
\qquad\not\Longrightarrow\qquad
F_\theta(b,u)=0.
\]
The exact spatial reduction below identifies distinguished dilation orbits
on which the Pohozaev Rayleigh functional becomes the actual parameter law
of an exact solution branch. Remarkably, the resulting branch geometry
depends on $N$ and $\theta$, but not on the particular form of the
Berestycki--Lions nonlinearity $g$.

\subsection{Exact spatial reduction and normalized profiles}
\label{subsec:general-exact-spatial}

We restrict the Pohozaev Rayleigh functional to a spatial dilation orbit
\[
(\mathcal G_s w)(x)=w(sx),
\qquad s>0,
\]
where
\[
w\in\mathcal D_G.
\]
Under spatial dilation,
\[
A(\mathcal G_s w)
=
s^{2-N}A(w),
\qquad
\mathcal B(\mathcal G_s w)
=
s^{-N}\mathcal B(w).
\]
Hence $\mathcal G_s w\in\mathcal D_G$ for every $s>0$, and
\[
\mathcal R_{\mathcal P,\theta}(\mathcal G_s w)
=
\frac{
	2Ns^{-N}\mathcal B(w)
	-
	(N-2)as^{2-N}A(w)
}{
	(N-2)
	\bigl(s^{2-N}A(w)\bigr)^{\theta+1}
}.
\]

Introduce
\begin{equation}\label{eq:general-sigma-w}
	\sigma_w
	:=
	\frac{2N\mathcal B(w)}{(N-2)A(w)}.
\end{equation}
Then the orbital Pohozaev Rayleigh profile becomes
\begin{equation}\label{eq:general-spatial-profile}
	\boxed{
		b_w(s)
		:=
		\mathcal R_{\mathcal P,\theta}(\mathcal G_s w)
		=
		\frac{s^{\theta(N-2)-2}}{A(w)^\theta}
		\left(
		\sigma_w-as^2
		\right).
	}
\end{equation}

At this stage \eqref{eq:general-spatial-profile} enforces only the
Pohozaev constraint. To determine when it generates an exact solution
branch, set
\[
u=\mathcal G_s w,
\qquad
b=b_w(s).
\]
Since
\[
A(\mathcal G_s w)^\theta
=
s^{-\theta(N-2)}A(w)^\theta,
\]
we obtain
\[
b_w(s)A(\mathcal G_s w)^\theta
=
s^{-2}
\left(
\sigma_w-as^2
\right),
\]
and therefore
\begin{equation}\label{eq:general-effective-coefficient}
	\boxed{
		a+
		b_w(s)A(\mathcal G_s w)^\theta
		=
		\sigma_w s^{-2}.
	}
\end{equation}
On the other hand,
\[
\Delta(\mathcal G_s w)(x)
=
s^2\Delta w(sx).
\]
Thus the factors $s^{-2}$ and $s^2$ cancel exactly, and substitution into
\eqref{eq:general-K} shows that
\[
F_\theta
\bigl(
b_w(s),\mathcal G_s w
\bigr)
=
0
\qquad\text{for all }s>0
\]
if and only if
\begin{equation}\label{eq:general-exactness}
	\boxed{
		-\sigma_w\Delta w
		=
		g(w)
		\qquad\text{in }\mathbb R^N.
	}
\end{equation}

Thus \eqref{eq:general-exactness} is the exactness condition selecting the
spatial directions along which the Pohozaev Rayleigh profile reconstructs
a genuine solution branch. Crucially, the orbit parameter $s$, the
Kirchhoff parameters $a$ and $b$, and the Kirchhoff exponent $\theta$ have
all disappeared from the profile equation.

For an exact direction contributing to the original problem $b\ge0$, one
necessarily has $\sigma_w>0$, since
\[
\sigma_w s^{-2}
=
a+bA(\mathcal G_s w)^\theta
>0.
\]
Define
\[
\phi(x)
=
w(\sqrt{\sigma_w}\,x).
\]
Then \eqref{eq:general-exactness} reduces to the normalized scalar-field
equation
\begin{equation}\label{eq:general-normalized-profile}
	\boxed{
		-\Delta\phi
		=
		g(\phi)
		\qquad\text{in }\mathbb R^N.
	}
\end{equation}

Conversely, let $\phi\in\mathcal D_G$ be a sufficiently regular solution of
\eqref{eq:general-normalized-profile}. Its Pohozaev identity is
\begin{equation}\label{eq:general-profile-Pohozaev}
	\frac{N-2}{2}A(\phi)
	-
	N\mathcal B(\phi)
	=
	0.
\end{equation}
Hence
\[
\sigma_\phi
=
\frac{2N\mathcal B(\phi)}{(N-2)A(\phi)}
=
1.
\]
Therefore every such normalized profile generates the extended exact spatial curve
\begin{equation}\label{eq:general-exact-branch}
	\boxed{
		\widetilde\Gamma_\phi
		=
		\left\{
		\bigl(
		b_\phi(s),\phi(s\,\cdot)
		\bigr):
		s>0
		\right\},
	}
\end{equation}
where
\begin{equation}\label{eq:general-exact-profile}
	\boxed{
		b_\phi(s)
		=
		\frac{1-as^2}{A_\phi^\theta}
		s^{\theta(N-2)-2},
		\qquad
		A_\phi:=A(\phi).
	}
\end{equation}

Thus the exact parameter law has the same form for every admissible
Berestycki--Lions nonlinearity. For the original parameter range $b\ge0$,
the relevant part of this curve is restricted below to $0<s\le a^{-1/2}$.
The nonlinearity $g$ selects the normalized profiles and their energies
$A_\phi$, whereas the dependence of the parameter on the dilation scale is
governed entirely by $N$, $\theta$, and $a$.

\begin{remark}[Universality of the spatial reduction]
	\label{rem:universality-general-profile}
	The normalized equation
	\eqref{eq:general-normalized-profile} is independent of
	$a$, $b$, and $\theta$. Consequently, for a fixed nonlinearity $g$,
	the same family of normalized scalar-field profiles generates exact
	branches for the whole class of Kirchhoff laws
	\[
	M_b(A)=a+bA^\theta,
	\qquad \theta>0.
	\]
	The nonlinearity enters each branch only through $A_\phi$, while its
	dependence on the scaling variable is always proportional to
	\[
	(1-as^2)s^{\theta(N-2)-2}.
	\]
	In particular, the dimension--homogeneity bifurcation geometry derived
	below is independent of the particular choice of $g$ within the
	admissible class.
\end{remark}

\begin{remark}[Nehari representation]
	\label{rem:general-Nehari-representation}
	If testing \eqref{eq:general-K} by $u$ is justified, every nontrivial
	solution also satisfies
	\[
	\bigl(a+bA(u)^\theta\bigr)A(u)
	=
	\int_{\mathbb R^N}g(u)u\,dx.
	\]
	Hence one may introduce
	\[
	\mathcal R_{\mathcal N,\theta}(u)
	:=
	\frac{
		\displaystyle
		\int_{\mathbb R^N}g(u)u\,dx-aA(u)
	}{
		A(u)^{\theta+1}
	}.
	\]
	Along every exact spatial branch generated by
	\eqref{eq:general-normalized-profile},
	\[
	\mathcal R_{\mathcal N,\theta}(\mathcal G_s\phi)
	=
	\mathcal R_{\mathcal P,\theta}(\mathcal G_s\phi)
	=
	b_\phi(s).
	\]
	Thus the Nehari representation yields the same parameter law on the
	exact branch, although it is not needed for the spatial reduction.
\end{remark}

\subsection{Dimension--homogeneity bifurcation trichotomy}
\label{subsec:general-exact-geometry}

Let $\phi\in\mathcal D_G$ be a sufficiently regular solution of
\eqref{eq:general-normalized-profile}, and set
\[
A_\phi:=A(\phi).
\]
By \eqref{eq:general-exact-profile}, the corresponding exact spatial branch
is governed by
\begin{equation}\label{eq:general-exact-profile-geometry}
	\boxed{
		b_\phi(s)
		=
		\frac{1-as^2}{A_\phi^\theta}
		s^{\theta(N-2)-2},
		\qquad s>0.
	}
\end{equation}

\begin{remark}[Generalized Azzollini rescaling relation]
	\label{rem:generalized-Azzollini-relation}
	For $u=\phi(s\,\cdot)$,
	\[
	A(u)^\theta
	=
	s^{-\theta(N-2)}A_\phi^\theta.
	\]
	Hence the exact rescaling relation is equivalent to
	\begin{equation}\label{eq:generalized-Azzollini-relation}
		\boxed{
			as^2
			+
			bA_\phi^\theta
			s^{\,2-\theta(N-2)}
			=
			1.
		}
	\end{equation}
	Solving for $b$ recovers
	\eqref{eq:general-exact-profile-geometry}. For $\theta=1$, this
	reduces to
	\[
	as^2+bA_\phi s^{4-N}=1,
	\]
	the usual Azzollini-type rescaling relation
	\cite{Azzollini2012,Azzollini2015} for the standard Kirchhoff law.
\end{remark}

For the original problem $b\ge0$, the relevant part of the exact curve is
\[
\Gamma_\phi
=
\left\{
\bigl(
b_\phi(s),\phi(s\,\cdot)
\bigr):
0<s\le s_{\rm z}
\right\},
\qquad
s_{\rm z}:=a^{-1/2},
\]
since
\[
b_\phi(s)\ge0
\quad\Longleftrightarrow\quad
0<s\le s_{\rm z}.
\]

The geometry of every exact spatial branch is governed by the
dimension--homogeneity index
\begin{equation}\label{eq:dimension-homogeneity-index}
	\boxed{
		k:=\theta(N-2)-2.
	}
\end{equation}
Indeed,
\[
b_\phi(s)
=
\frac{1-as^2}{A_\phi^\theta}s^k,
\]
and
\begin{equation}\label{eq:general-profile-derivative}
	b_\phi'(s)
	=
	\frac{s^{k-1}}{A_\phi^\theta}
	\left[
	k-a(k+2)s^2
	\right].
\end{equation}
The factor $A_\phi^{-\theta}$ changes only the vertical scale of the
profile. Its qualitative geometry is therefore determined solely by
$N$ and $\theta$. This dimension--homogeneity balance leads to three
distinct regimes, illustrated in Figure~\ref{fig:trichotomy-geometry}.

\begin{figure}[t]
	\centering
	\includegraphics[width=.82\textwidth]{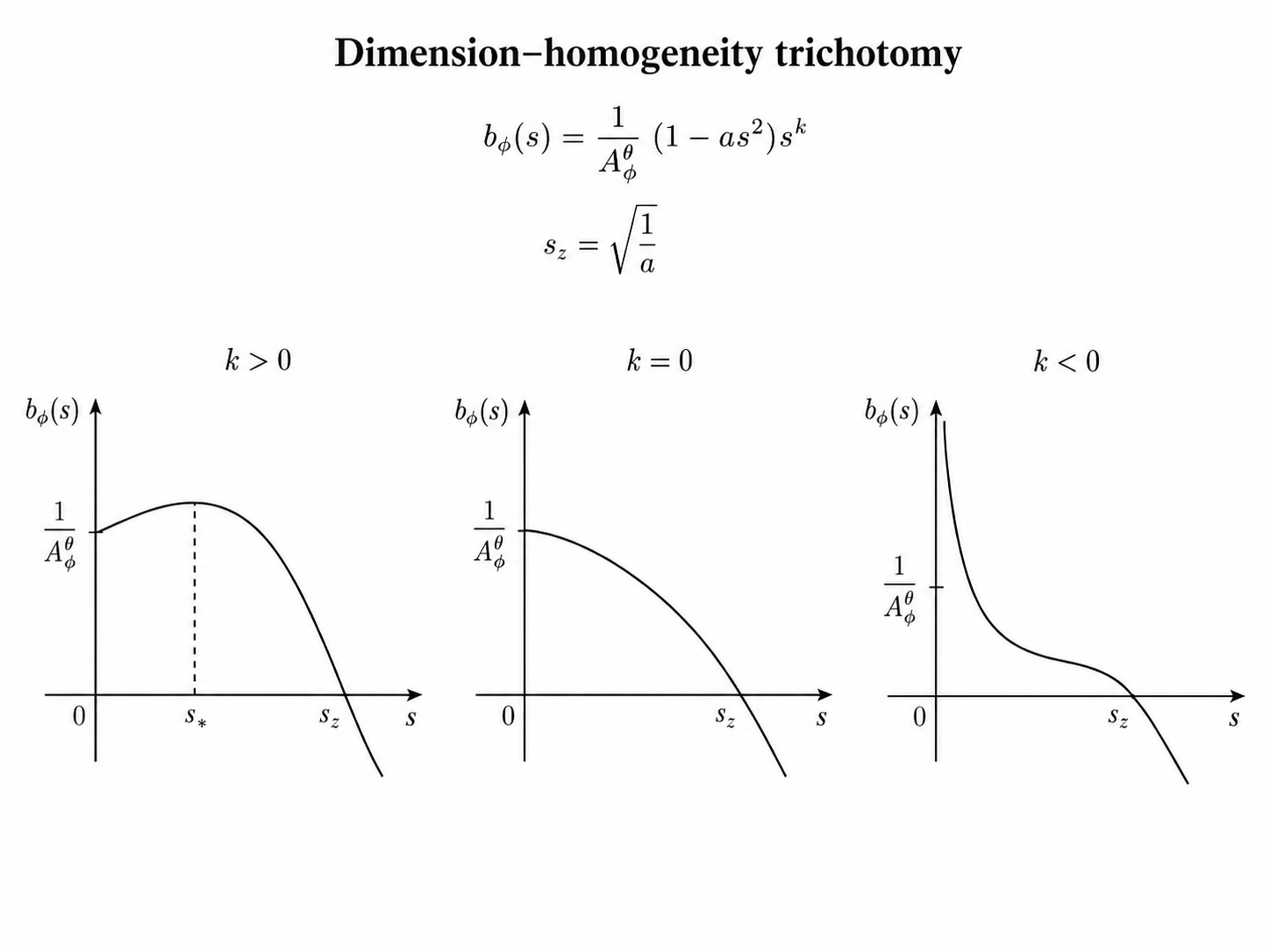}
	\caption{Dimension--homogeneity trichotomy for the exact spatial
		Rayleigh profile
		$b_\phi(s)=A_\phi^{-\theta}(1-as^2)s^k$,
		where $k=\theta(N-2)-2$.
		For $k>0$ the profile has a unique interior maximum $s_*$;
		for $k=0$ it decreases monotonically from a finite nonattained
		upper value; and for $k<0$ it is singular as $s\downarrow0$ and
		decreases monotonically. In all three cases
		$b_\phi(s_{\rm z})=0$, with $s_{\rm z}=a^{-1/2}$.}
	\label{fig:trichotomy-geometry}
\end{figure}

\medskip
\noindent
\textbf{Supercritical scaling regime: $\theta(N-2)>2$.}
Here $k>0$ and
\[
b_\phi(s)\to0
\quad\text{as }s\downarrow0,
\qquad
b_\phi(s_{\rm z})=0.
\]
The equation $b_\phi'(s)=0$ has the unique solution
\begin{equation}\label{eq:general-s-star}
	\boxed{
		s_*
		=
		\left(
		\frac{\theta(N-2)-2}
		{a\theta(N-2)}
		\right)^{1/2},
	}
\end{equation}
with
\[
0<s_*<s_{\rm z}.
\]
Moreover,
\begin{equation}\label{eq:general-second-derivative}
	b_\phi''(s_*)
	=
	-\frac{2a\theta(N-2)}{A_\phi^\theta}
	s_*^{\theta(N-2)-2}
	<0.
\end{equation}
Hence $s_*$ is the unique nondegenerate global maximum of the exact
Rayleigh profile, with
\begin{equation}\label{eq:general-critical-value}
	\boxed{
		b_\phi^*
		=
		\frac{2}
		{\theta(N-2)A_\phi^\theta}
		\left(
		\frac{\theta(N-2)-2}
		{a\theta(N-2)}
		\right)^{\frac{\theta(N-2)-2}{2}}.
	}
\end{equation}

The dilation orbit
\[
s\longmapsto\phi(s\,\cdot)
\]
is regular for $\phi\neq0$: otherwise $x\cdot\nabla\phi=0$, which is
incompatible with a nonzero $H^1(\mathbb R^N)$ profile. Therefore, by
Theorem~\ref{thm:Rayleigh-bifurcation},
\[
\bigl(
b_\phi^*,\phi(s_*\cdot)
\bigr)
\]
is a nondegenerate turning bifurcation point of the exact spatial branch.

\medskip
\noindent
\textbf{Critical scaling regime: $\theta(N-2)=2$.}
Here
\[
b_\phi(s)
=
\frac{1-as^2}{A_\phi^\theta}
\]
is strictly decreasing on $(0,s_{\rm z}]$, with
\[
\lim_{s\downarrow0}b_\phi(s)
=
A_\phi^{-\theta},
\qquad
b_\phi(s_{\rm z})=0.
\]
Thus $A_\phi^{-\theta}$ is a nonattained upper threshold, and there is no
interior turning point.

\medskip
\noindent
\textbf{Subcritical scaling regime: $\theta(N-2)<2$.}
Here $k<0$ and, since
\[
k+2=\theta(N-2)>0,
\]
we have
\[
k-a(k+2)s^2<0,
\qquad s>0.
\]
Hence $b_\phi$ is strictly decreasing, with
\[
b_\phi(s)\to+\infty
\quad\text{as }s\downarrow0,
\qquad
b_\phi(s_{\rm z})=0.
\]
Again there is no interior turning point.

We have therefore obtained the following universal spatial bifurcation
trichotomy.

\begin{theorem}[Dimension--homogeneity trichotomy]
	\label{thm:dimension-homogeneity-trichotomy}
	Let $\phi\in\mathcal D_G$ be a sufficiently regular solution of
	\eqref{eq:general-normalized-profile}. Then:
	
	\begin{enumerate}[label=\textup{(\roman*)}]
		\item if $\theta(N-2)>2$, the exact spatial branch
		$\Gamma_\phi$ has a unique nondegenerate turning bifurcation point
		\[
		\bigl(
		b_\phi^*,\phi(s_*\cdot)
		\bigr).
		\]
		The branch contains exactly two points for
		$0<b<b_\phi^*$, one point for $b=0$, one turning solution for
		$b=b_\phi^*$, and none for $b>b_\phi^*$;
		
		\item if $\theta(N-2)=2$, the branch contains exactly one point
		for
		\[
		0\le b<A_\phi^{-\theta},
		\]
		and none for $b\ge A_\phi^{-\theta}$;
		
		\item if $\theta(N-2)<2$, the branch contains exactly one point
		for every $b\ge0$.
	\end{enumerate}
	
	In particular,
	\[
	\boxed{
		\theta(N-2)>2
	}
	\]
	is the sharp dimension--homogeneity condition for the occurrence of
	an interior turning bifurcation on an exact spatial branch. This
	criterion is independent of the particular Berestycki--Lions
	nonlinearity $g$.
\end{theorem}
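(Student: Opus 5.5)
The proof reduces to the calculus of the scalar profile \eqref{eq:general-exact-profile-geometry} on $(0,s_{\rm z}]$, combined with exactness and Theorem~\ref{thm:Rayleigh-bifurcation}. First, exactness: since $\phi$ solves \eqref{eq:general-normalized-profile} and the Pohozaev identity \eqref{eq:general-profile-Pohozaev} gives $\sigma_\phi=1$, the computation leading to \eqref{eq:general-effective-coefficient}--\eqref{eq:general-exactness} shows $F_\theta(b_\phi(s),\phi(s\,\cdot))=0$ for every $s>0$. Hence each point of $\Gamma_\phi$ at level $b$ corresponds to a root $s\in(0,s_{\rm z}]$ of $b_\phi(s)=b$. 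Next, distinct $s$ give distinct states $\phi(s\,\cdot)$, because $A(\phi(s\,\cdot))=s^{2-N}A_\phi$ with $A_\phi>0$ and $N\ge3$, so the map $s\mapsto A(\phi(s\,\cdot))$ is strictly monotone. Counting branch points at level $b$ is therefore the same as counting roots of $b_\phi(s)=b$ on $(0,s_{\rm z}]$.

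Then I would use the sign formula \eqref{eq:general-profile-derivative}, $b_\phi'(s)=A_\phi^{-\theta}s^{k-1}[k-a(k+2)s^2]$ with $k+2=\theta(N-2)>0$, and split into three cases.

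\emph{Case $k>0$.} The derivative $b_\phi'$ is positive on $(0,s_*)$ and negative on $(s_*,\infty)$, with $s_*<s_{\rm z}$. Also $b_\phi(0^+)=0$ and $b_\phi(s_{\rm z})=0$. So $b_\phi$ increases strictly from $0$ to $b_\phi^*$ and then decreases strictly back to $0$. By the intermediate value theorem and strict monotonicity on each piece, there are:
\begin{itemize}
\item exactly two roots for $0<b<b_\phi^*$;
\item one root ($s_*$) for $b=b_\phi^*$;
\item none for $b>b_\phi^*$;
\item exactly one root for $b=0$, namely $s_{\rm z}$, since $s\to0$ is excluded.
\end{itemize}
Nondegeneracy follows from \eqref{eq:general-second-derivative}, and regularity of the orbit comes from $\frac{d}{ds}\phi(s x)|_{s_*}=x\cdot\nabla\phi(s_*x)\neq0$. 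I would justify the latter as follows: if $x\cdot\nabla\phi\equiv0$, then $\phi$ is homogeneous of degree $0$, hence constant along rays. Such a function is in $L^2(\mathbb R^N)$ only if it vanishes. Theorem~\ref{thm:Rayleigh-bifurcation} then yields the turning point. Uniqueness of the turning point holds because $s_*$ is the only critical point of $b_\phi$.

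\emph{Case $k=0$.} Here $b_\phi(s)=A_\phi^{-\theta}(1-as^2)$ is a strictly decreasing bijection of $(0,s_{\rm z}]$ onto $[0,A_\phi^{-\theta})$. This gives exactly one point for $0\le b<A_\phi^{-\theta}$ and none otherwise.

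\emph{Case $k<0$.} The bracket $k-a(k+2)s^2$ is negative, so $b_\phi$ is strictly decreasing. Since $b_\phi(s)\to+\infty$ as $s\downarrow0$ (because $s^k\to\infty$ and $1-as^2\to1$) and $b_\phi(s_{\rm z})=0$, it is a bijection of $(0,s_{\rm z}]$ onto $[0,\infty)$.

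Sharpness is then immediate: in the cases $k\le0$, $b_\phi'$ has no zero on $(0,s_{\rm z})$, so there is no interior critical point and no turning point. Independence of $g$ holds because $g$ enters only through the positive constant $A_\phi$, which scales $b_\phi$ vertically.

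I expect the main obstacle to be the regularity of the dilation orbit, that is, showing that $s\mapsto\phi(s\,\cdot)$ is $C^1$ in $H^1$ with nonzero derivative. $C^1$ dependence in $H^1$ needs $x\cdot\nabla\phi\in H^1$, which would come from the decay and regularity of Berestycki--Lions solutions (the ``sufficiently regular'' hypothesis). If that is troublesome, I would instead invoke the injectivity argument above directly. That argument only needs continuity of dilation in $H^1$, which holds for every $\phi\in H^1$, and it already gives the two distinct solutions converging to $\phi(s_*\cdot)$ without passing through Theorem~\ref{thm:Rayleigh-bifurcation}'s regularity hypothesis.
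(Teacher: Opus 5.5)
Your proposal is correct and follows essentially the same route as the paper. The shared steps are the sign analysis of $b_\phi'$ through the index $k=\theta(N-2)-2$, the three monotonicity regimes of $b_\phi$ on $(0,s_{\rm z}]$, and Theorem~\ref{thm:Rayleigh-bifurcation}, with orbit regularity coming from $x\cdot\nabla\phi\not\equiv0$. Your explicit injectivity check via $A(\phi(s\,\cdot))=s^{2-N}A_\phi$ is a small refinement that the paper leaves implicit: it justifies identifying branch points with roots of $b_\phi(s)=b$, and it can also replace the $C^1$/Hahn--Banach step.
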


\subsection{Ground states and the global threshold}
\label{subsec:profile-Rayleigh-maximal-branch}
Let
\[
\mathcal P
:=
\left\{
\phi\in\mathcal D_G:
-\Delta\phi=g(\phi)
\ \text{in }\mathbb R^N
\right\},
\]
and
\[
\mathcal P_{\rm rad}
:=
\mathcal P\cap H^1_{\rm rad}(\mathbb R^N).
\]
Define the scalar-field action
\[
J(\phi)
:=
\frac12A(\phi)-\mathcal B(\phi).
\]
Under the Berestycki--Lions assumptions, the normalized problem admits a
positive radial ground state; see \cite{BerestyckiLions1983I}. Let
$U\in\mathcal P_{\rm rad}$ be any such ground state and set
\[
c_g
:=
J(U)
=
\inf_{\phi\in\mathcal P}J(\phi),
\qquad
A_g:=A(U).
\]

For every $\phi\in\mathcal P$, the Pohozaev identity
\[
\frac{N-2}{2}A(\phi)-N\mathcal B(\phi)=0
\]
gives
\begin{equation}\label{eq:general-action-energy}
	\boxed{
		J(\phi)=\frac1N A(\phi).
	}
\end{equation}
Consequently,
\begin{equation}\label{eq:ground-state-minimal-energy}
	\boxed{
		A_g
		=
		\min_{\phi\in\mathcal P}A(\phi)
		=
		Nc_g.
	}
\end{equation}
In particular, all ground states have the same gradient energy $A_g$.

By the exact spatial reduction, every $\phi\in\mathcal P$ generates the
branch
\[
\Gamma_\phi
=
\left\{
\bigl(
b_\phi(s),\phi(s\,\cdot)
\bigr):
0<s\le s_{\rm z}
\right\},
\]
where
\[
b_\phi(s)
=
\frac{1-as^2}{A(\phi)^\theta}
s^{\theta(N-2)-2}.
\]
Thus every ground state generates the same scalar parameter profile
\begin{equation}\label{eq:ground-state-profile}
	\boxed{
		b_g(s)
		=
		\frac{1-as^2}{A_g^\theta}
		s^{\theta(N-2)-2}.
	}
\end{equation}
The corresponding solution branches themselves need not coincide when the
normalized ground state is nonunique.

Conversely, the exact spatial class exhausts all nontrivial finite-energy
solutions of the Kirchhoff equation. Indeed, let $u\neq0$ solve
\eqref{eq:general-K} and set
\[
\kappa:=a+bA(u)^\theta.
\]
Then
\[
\phi(x):=u(\sqrt{\kappa}\,x)
\]
satisfies
\[
-\Delta\phi=g(\phi).
\]
Since $u\in\mathcal D_G$ and $\mathcal D_G$ is invariant under spatial
dilations, we also have $\phi\in\mathcal D_G$, and hence
$\phi\in\mathcal P$. Moreover,
\[
u(x)=\phi(sx),
\qquad
s=\kappa^{-1/2}.
\]
Since $b\ge0$,
\[
\kappa\ge a,
\qquad
0<s\le s_{\rm z}=a^{-1/2}.
\]
Thus every nontrivial finite-energy solution belongs to one of the exact
branches $\Gamma_\phi$.

Assume first that
\[
\theta(N-2)>2.
\]
By \eqref{eq:general-critical-value},
\[
b_\phi^*
=
\frac{C_{a,N,\theta}}{A(\phi)^\theta},
\]
where
\begin{equation}\label{eq:general-C-antheta}
	C_{a,N,\theta}
	:=
	\frac{2}{\theta(N-2)}
	\left(
	\frac{\theta(N-2)-2}
	{a\theta(N-2)}
	\right)^{\frac{\theta(N-2)-2}{2}}.
\end{equation}
Since the ground states minimize $A(\phi)$ on $\mathcal P$, they maximize
the turning value. Hence
\begin{equation}\label{eq:maximal-ground-state-branch}
	\boxed{
		b_g^*
		:=
		\sup_{\phi\in\mathcal P}b_\phi^*
		=
		\frac{C_{a,N,\theta}}{A_g^\theta}.
	}
\end{equation}
Equivalently,
\begin{equation}\label{eq:bg-action-level}
	\boxed{
		b_g^*
		=
		\frac{C_{a,N,\theta}}
		{(Nc_g)^\theta}.
	}
\end{equation}

Thus every ground-state profile generates the maximal turning parameter.
Moreover, if $u\neq0$ is any solution, then
\[
u=\phi(s\,\cdot)
\]
for some $\phi\in\mathcal P$, and therefore
\[
b=b_\phi(s)
\le b_\phi^*
\le b_g^*.
\]
Consequently, $b_g^*$ is the global upper existence threshold:
\begin{equation}\label{eq:global-upper-threshold-general}
	\boxed{
		b>b_g^*
		\quad\Longrightarrow\quad
		\text{\eqref{eq:general-K} has no nontrivial solution}.
	}
\end{equation}

The same ground-state selection determines the global threshold in the
critical regime. If
\[
\theta(N-2)=2,
\]
then the branch generated by $\phi\in\mathcal P$ exists precisely for
\[
0\le b<A(\phi)^{-\theta}.
\]
Since $A(\phi)\ge A_g$, the value
\begin{equation}\label{eq:critical-global-threshold}
	\boxed{
		b_g^{\,0}
		:=
		A_g^{-\theta}
		=
		(Nc_g)^{-\theta}
	}
\end{equation}
is the sharp nonattained global upper existence threshold. Every
ground-state branch realizes
\[
0\le b<b_g^{\,0},
\]
whereas no nontrivial solution exists for
\[
b\ge b_g^{\,0}.
\]

If
\[
\theta(N-2)<2,
\]
there is no finite upper parameter threshold: every ground-state branch
contains exactly one solution for each $b\ge0$.

We summarize these observations as follows.

\begin{corollary}[Ground-state maximality]
	\label{cor:ground-state-maximality}
	Let $U$ be any ground state of
	\eqref{eq:general-normalized-profile}, and let
	$A_g=A(U)=Nc_g$.
	
	\begin{enumerate}[label=\textup{(\roman*)}]
		\item If $\theta(N-2)>2$, every ground-state branch has the same
		nondegenerate turning value
		\[
		b_g^*
		=
		\frac{C_{a,N,\theta}}{A_g^\theta}
		=
		\sup_{\phi\in\mathcal P}b_\phi^*,
		\]
		and $b_g^*$ is the global upper existence threshold.
		
		\item If $\theta(N-2)=2$, the value
		\[
		b_g^{\,0}=A_g^{-\theta}
		\]
		is the sharp nonattained global upper existence threshold.
		
		\item If $\theta(N-2)<2$, the Kirchhoff problem has no finite
		upper threshold, and every ground-state branch contains exactly
		one solution for every $b\ge0$.
	\end{enumerate}
\end{corollary}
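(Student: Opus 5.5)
The plan is to reduce all three parts of Corollary~\ref{cor:ground-state-maximality} to two facts already in hand: the branch geometry of Theorem~\ref{thm:dimension-homogeneity-trichotomy}, and the exhaustion of all nontrivial finite-energy solutions by the exact branches $\Gamma_\phi$, $\phi\in\mathcal P$. Combining the Pohozaev identity with \eqref{eq:general-action-energy} gives $A(\phi)=NJ(\phi)\ge Nc_g=A_g$ for every $\phi\in\mathcal P$, with equality for every ground state $U$. Hence every quantity that depends on $\phi$ only through $A(\phi)^{-\theta}$ is maximized exactly by ground states, and all ground states produce the same profile $b_g$ in \eqref{eq:ground-state-profile}.

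\emph{Part (i).} For $\theta(N-2)>2$, formula \eqref{eq:general-critical-value} gives $b_\phi^*=C_{a,N,\theta}A(\phi)^{-\theta}$. The bound $A(\phi)\ge A_g$ then yields \eqref{eq:maximal-ground-state-branch}, with the supremum attained at ground states. That this turning value is nondegenerate follows from Theorem~\ref{thm:dimension-homogeneity-trichotomy}(i), which rests on $b_\phi''(s_*)<0$ and the regularity of the dilation orbit.

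For the global threshold, take any nontrivial solution $u$. By the exhaustion argument, $u=\phi(s\,\cdot)$ with $\phi(x)=u(\sqrt\kappa\,x)\in\mathcal P$ and $0<s\le s_{\rm z}$. Exactness forces $b=b_\phi(s)$; this is the Azzollini relation \eqref{eq:generalized-Azzollini-relation}, which is just the definition of $\kappa$. Therefore
\[
b\le\max_{s}b_\phi=b_\phi^*\le b_g^*,
\]
so no nontrivial solution exists for $b>b_g^*$. Existence for $0\le b\le b_g^*$ comes from the ground-state branch itself.

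\emph{Part (ii).} For $\theta(N-2)=2$, we have $b_\phi(s)=(1-as^2)A(\phi)^{-\theta}<A(\phi)^{-\theta}\le A_g^{-\theta}$ for all $s>0$. So every solution satisfies $b<b_g^{\,0}$, and the value $b_g^{\,0}$ itself is never attained. Conversely, by Theorem~\ref{thm:dimension-homogeneity-trichotomy}(ii) the ground-state branch covers every $b$ in $[0,b_g^{\,0})$, which gives sharpness.

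\emph{Part (iii).} For $\theta(N-2)<2$, Theorem~\ref{thm:dimension-homogeneity-trichotomy}(iii) shows directly that each ground-state branch meets every level $b\ge0$ exactly once. Hence no finite threshold exists.

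The main obstacle is the exhaustion step used for the upper bounds. The rescaled function $\phi$ must lie in $\mathcal P$, which relies on the dilation invariance of $\mathcal D_G$. It must also satisfy the Pohozaev identity, which requires the standing regularity convention. And one must check that $b_\phi(s)$ recovers the original $b$, which is an algebraic identity from $\kappa=s^{-2}$.
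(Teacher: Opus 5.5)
Your proposal is correct and follows essentially the paper's argument: the Pohozaev identity gives $A(\phi)=NJ(\phi)\ge A_g$ on $\mathcal P$, and the exhaustion $u=\phi(s\,\cdot)$ with $s=\kappa^{-1/2}\le s_{\rm z}$ bounds $b$ by $b_\phi^*\le b_g^*$ (resp.\ $b<A_g^{-\theta}$), while the trichotomy supplies attainment, sharpness, and the one-solution-per-level statement. Your explicit check that $b=b_\phi(s)$ follows algebraically from $\kappa=s^{-2}$ spells out a step the paper leaves implicit.
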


For the standard Kirchhoff law $\theta=1$, the governing exponent becomes
\[
k=N-4.
\]
Hence the classical distinction between $N=3$, $N=4$, and $N\ge5$ is
precisely the specialization of the dimension--homogeneity balance.

\begin{corollary}[Standard Kirchhoff law]
	\label{cor:standard-Kirchhoff-trichotomy}
	For $\theta=1$:
	\begin{enumerate}[label=\textup{(\roman*)}]
		\item if $N\ge5$, every exact spatial branch has a unique
		nondegenerate turning bifurcation point, with
		\[
		s_*
		=
		\left(
		\frac{N-4}{a(N-2)}
		\right)^{1/2},
		\qquad
		b_\phi^*
		=
		\frac{2}{A_\phi(N-2)}
		\left(
		\frac{N-4}{a(N-2)}
		\right)^{\frac{N-4}{2}};
		\]
		
		\item if $N=4$, every branch is strictly decreasing from the
		nonattained value $A_\phi^{-1}$ to $0$;
		
		\item if $N=3$, every branch is strictly decreasing from
		$+\infty$ to $0$.
	\end{enumerate}
\end{corollary}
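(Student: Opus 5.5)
This corollary follows by specializing Theorem~\ref{thm:dimension-homogeneity-trichotomy} to $\theta=1$. I would substitute $\theta=1$ into the exact spatial profile \eqref{eq:general-exact-profile-geometry}, which gives
\[
b_\phi(s)=\frac{1-as^2}{A_\phi}\,s^{N-4},
\]
so the dimension--homogeneity index \eqref{eq:dimension-homogeneity-index} becomes $k=N-4$. Since $N\ge3$ is an integer, the three regimes $k>0$, $k=0$, $k<0$ correspond exactly to $N\ge5$, $N=4$ and $N=3$. I would then read off each case from the general analysis.

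For $N\ge5$, insert $\theta=1$ into \eqref{eq:general-s-star} and \eqref{eq:general-critical-value}. This gives
\[
s_*=\bigl((N-4)/(a(N-2))\bigr)^{1/2},
\qquad
b_\phi^*=\frac{2}{(N-2)A_\phi}\bigl((N-4)/(a(N-2))\bigr)^{(N-4)/2}.
\]
Nondegeneracy comes from \eqref{eq:general-second-derivative}, namely $b_\phi''(s_*)=-2a(N-2)A_\phi^{-1}s_*^{N-4}<0$. Regularity of the dilation orbit has already been established. Theorem~\ref{thm:Rayleigh-bifurcation} then makes $(b_\phi^*,\phi(s_*\cdot))$ a nondegenerate turning point. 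Uniqueness holds because $b_\phi'$ vanishes on $(0,\infty)$ only at $s_*$, by \eqref{eq:general-profile-derivative}.

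For $N=4$, the profile is $b_\phi(s)=(1-as^2)/A_\phi$. It is strictly decreasing on $(0,s_{\rm z}]$, tends to $A_\phi^{-1}$ as $s\downarrow0$ without attaining it, and reaches $0$ at $s_{\rm z}$.

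For $N=3$, the profile is $b_\phi(s)=(1-as^2)/(A_\phi s)$. Its derivative is $-(1+as^2)/(A_\phi s^2)<0$. Hence it decreases strictly from $+\infty$ as $s\downarrow0$ to $0$ at $s_{\rm z}$.

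No step is a real obstacle; the proof is a substitution check. The only point needing care is that the exponent in $b_\phi^*$ is correct: $(k)/2$ with $k=N-4$, and that the prefactor is $2/(N-2)$.
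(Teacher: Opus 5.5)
Your proposal is correct and follows the paper's route: the paper obtains this corollary by setting $\theta=1$, so that the dimension--homogeneity index becomes $k=N-4$, and then reading the three regimes and the formulas for $s_*$ and $b_\phi^*$ off Theorem~\ref{thm:dimension-homogeneity-trichotomy} and the preceding computations. Your explicit checks of $b_\phi''(s_*)=-2a(N-2)A_\phi^{-1}s_*^{N-4}$ and of the $N=3$ derivative $-(1+as^2)/(A_\phi s^2)$ are accurate and simply spell that specialization out in more detail than the paper does.
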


\begin{remark}[Positive radial solutions and the power case]
	\label{rem:positive-radial-power-case}
	For a general Berestycki--Lions nonlinearity, uniqueness of a positive
	radial normalized profile is not assumed. Hence the positive radial
	solution set of the Kirchhoff problem is, in general, the union of the
	exact branches generated by all positive profiles in
	$\mathcal P_{\rm rad}$.
	
	For the power nonlinearity
	\[
	g(t)=|t|^{p-2}t-\mu t,
	\qquad
	2<p<2^*,
	\]
	Kwong's theorem \cite{Kwong1989} gives uniqueness of the positive
	radial normalized profile $U_\mu$. In this case the positive radial
	solution set reduces to a single exact spatial branch.
\end{remark}

\subsection{Variational hierarchy of turning thresholds}
\label{subsec:countable-general-branches}

Continue to assume
\[
\theta(N-2)>2.
\]
Under the Berestycki--Lions assumptions, the normalized scalar-field
equation possesses infinitely many distinct radial solution pairs; see
\cite{BerestyckiLions1983II}. Let $\phi_1=U$ be a ground state and choose
the remaining profiles from a radial variational sequence so that
\[
\{\pm\phi_j\}_{j\ge1}
\subset\mathcal P_{\rm rad},
\qquad
J(\phi_j)\longrightarrow+\infty.
\]
Since
\[
J(\phi)=\frac1N A(\phi)
\qquad\text{on }\mathcal P,
\]
after passing to a subsequence and relabeling we may assume
\[
A_j:=A(\phi_j),
\qquad
A_1=A_g\le A_2\le\cdots,
\qquad
A_j\longrightarrow+\infty.
\]

Each profile generates the exact branch
\[
\Gamma_j
=
\left\{
\bigl(
b_j(s),\phi_j(s\,\cdot)
\bigr):
0<s\le s_{\rm z}
\right\},
\]
where
\begin{equation}\label{eq:general-countable-branches}
	b_j(s)
	=
	\frac{1-as^2}{A_j^\theta}
	s^{\theta(N-2)-2}.
\end{equation}
All branches have the same critical spatial scale $s_*$, while their
turning values are
\begin{equation}\label{eq:general-countable-bstar}
	\boxed{
		b_j^*
		=
		\frac{2}
		{\theta(N-2)A_j^\theta}
		\left(
		\frac{\theta(N-2)-2}
		{a\theta(N-2)}
		\right)^{\frac{\theta(N-2)-2}{2}}.
	}
\end{equation}
Hence
\begin{equation}\label{eq:general-countable-order}
	\boxed{
		b_1^*
		=
		b_g^*
		\ge
		b_2^*
		\ge
		\cdots
		\longrightarrow0.
	}
\end{equation}

The branches are mutually distinct. Indeed, suppose that
\[
\phi_i(s\,\cdot)
=
\pm\phi_j(r\,\cdot)
=:u.
\]
Since $g$ is odd and both $\phi_i$ and $\phi_j$ solve the normalized
equation, $u$ satisfies
\[
-s^{-2}\Delta u=g(u),
\qquad
-r^{-2}\Delta u=g(u).
\]
Subtracting gives
\[
(s^{-2}-r^{-2})\Delta u=0.
\]
If $s\neq r$, then $\Delta u=0$. Since
$u\in H^1(\mathbb R^N)$, this implies $u=0$, a contradiction.
Hence $s=r$, and consequently
\[
\phi_i=\pm\phi_j.
\]

Let
\[
\mathcal S_{\rm ex}
:=
\left\{
\mathcal G_s\phi:
\phi\in\mathcal P,\ 
0<s\le s_{\rm z}
\right\}.
\]
Define the upper Rayleigh threshold of the exact spatial class by
\begin{equation}\label{eq:general-exact-threshold-definition}
	b_{\mathcal G,\mathcal S_{\rm ex}}^{**}
	:=
	\sup_{\phi\in\mathcal P}
	\sup_{0<s\le s_{\rm z}}
	\mathcal R_{\mathcal P,\theta}(\mathcal G_s\phi).
\end{equation}
Since
\[
\mathcal R_{\mathcal P,\theta}(\mathcal G_s\phi)
=
b_\phi(s),
\]
and the ground states minimize $A(\phi)$ on $\mathcal P$, we obtain
\begin{equation}\label{eq:general-max-threshold}
	\boxed{
		b_{\mathcal G,\mathcal S_{\rm ex}}^{**}
		=
		b_1^*
		=
		b_g^*.
	}
\end{equation}
By the exhaustivity established in
Subsection~\ref{subsec:profile-Rayleigh-maximal-branch}, this is also the
global upper existence threshold for \eqref{eq:general-K}.

Thus the variational hierarchy of the normalized scalar-field equation is
transformed by the exact Rayleigh reduction into an inverse hierarchy of
Kirchhoff turning thresholds.

\begin{theorem}[Spatial bifurcation hierarchy]
	\label{thm:general-spatial-hierarchy}
	Assume $\theta(N-2)>2$. Then \eqref{eq:general-K} possesses infinitely
	many distinct exact radial branches $\Gamma_j$, each with a unique
	nondegenerate turning bifurcation point
	\[
	\bigl(
	b_j^*,\phi_j(s_*\cdot)
	\bigr).
	\]
	The corresponding turning values satisfy
	\[
	\boxed{
		b_{\mathcal G,\mathcal S_{\rm ex}}^{**}
		=
		b_1^*
		=
		b_g^*
		\ge
		b_2^*
		\ge
		\cdots
		\longrightarrow0.
	}
	\]
	
	More precisely:
	\begin{enumerate}[label=\textup{(\roman*)}]
		\item for $b=0$, problem \eqref{eq:general-K} possesses infinitely
		many distinct antipodal pairs of nontrivial radial solutions;
		
		\item for every $m\in\mathbb N$ and
		\[
		0<b<b_m^*,
		\]
		it possesses at least $2m$ distinct antipodal pairs of nontrivial
		radial solutions;
		
		\item for
		\[
		b>b_g^*
		=
		b_{\mathcal G,\mathcal S_{\rm ex}}^{**},
		\]
		problem \eqref{eq:general-K} has no nontrivial solutions.
	\end{enumerate}
\end{theorem}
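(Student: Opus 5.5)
The proof will follow the bounded-domain argument of Theorem~\ref{thm:infinite-scaling-hierarchy}, now using the spatial profile \eqref{eq:general-countable-branches} and the exhaustivity established in Subsection~\ref{subsec:profile-Rayleigh-maximal-branch}.

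\emph{Branch data.} First I fix the sequence $\{\pm\phi_j\}\subset\mathcal P_{\rm rad}$ from \cite{BerestyckiLions1983II}. It has $\phi_1=U$ and, after relabeling, $A_1=A_g\le A_2\le\cdots\to\infty$. Each $\phi_j$ solves \eqref{eq:general-normalized-profile}, so the exact spatial reduction of Subsection~\ref{subsec:general-exact-spatial} makes $\Gamma_j$ an exact branch with parameter law $b_j(s)$. Since $\phi_j$ is radial and dilation preserves radiality, every point of $\Gamma_j$ is radial. Oddness of $g$ shows that $-\phi_j$ generates the antipodal branch with the same profile $b_j$. Distinctness of the branch pairs is the argument already given before the statement: $\phi_i(s\cdot)=\pm\phi_j(r\cdot)$ forces $\Delta u=0$ with $u\in H^1$ when $s\ne r$, and forces $\phi_i=\pm\phi_j$ when $s=r$. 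The relabeling after passing to a subsequence must keep the profile pairs pairwise distinct; I will choose the subsequence inside a set of distinct pairs from the start.

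\emph{Turning points and ordering.} By case (i) of Theorem~\ref{thm:dimension-homogeneity-trichotomy}, each $b_j$ has a unique nondegenerate maximum at the common scale $s_*$ in \eqref{eq:general-s-star}, with $b_j''(s_*)<0$. The dilation orbit is regular at $s_*$, because a vanishing derivative would give $x\cdot\nabla\phi_j=0$. Theorem~\ref{thm:Rayleigh-bifurcation} then makes $(b_j^*,\phi_j(s_*\cdot))$ a nondegenerate turning bifurcation point, and the same holds for the antipodal copy. The formula \eqref{eq:general-countable-bstar} gives $b_j^*=C_{a,N,\theta}A_j^{-\theta}$. Monotonicity of $A_j$ yields the ordering, and $A_j\to\infty$ gives $b_j^*\to0$. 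The identity $b_1^*=b_g^*=b^{**}_{\mathcal G,\mathcal S_{\rm ex}}$ is \eqref{eq:general-max-threshold}.

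\emph{Counting solutions.} For (i), $b=0$ corresponds exactly to $s=s_{\rm z}$ on every branch. This gives the antipodal pair $\{\pm\phi_j(s_{\rm z}\cdot)\}$ for each $j$, and these pairs are pairwise distinct by the distinctness step, so there are infinitely many. For (ii), take $0<b<b_m^*\le b_j^*$ with $j\le m$. The profile $b_j$ increases continuously on $(0,s_*]$ from $0$ to $b_j^*$ and decreases on $[s_*,s_{\rm z}]$ from $b_j^*$ to $0$. So there are two scales $s_j^-<s_*<s_j^+$ in $(0,s_{\rm z})$ at level $b$. Local injectivity along a single orbit, together with distinctness across $j$, gives $2m$ distinct antipodal pairs. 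For (iii), every nontrivial solution has the form $u=\phi(s\cdot)$ with $\phi\in\mathcal P$ and $0<s\le s_{\rm z}$, by exhaustivity. Hence $b=b_\phi(s)\le b_\phi^*\le b_g^*$, which is \eqref{eq:global-upper-threshold-general}.

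\emph{Main obstacle.} The delicate step is distinctness of the two solutions on one branch, i.e.\ $\phi_j(s\cdot)\ne\phi_j(r\cdot)$ for $s\ne r$. Theorem~\ref{thm:Rayleigh-bifurcation} gives this only near $s_*$. For the global count in (ii), I will reuse the $\Delta u=0$ argument with $i=j$, which works for every $s\ne r$.
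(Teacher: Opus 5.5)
Your proposal is correct and follows the paper's proof essentially step for step. You use the same $\Delta u=0$ distinctness argument, case (i) of the trichotomy together with Theorem~\ref{thm:Rayleigh-bifurcation} for the turning points, the explicit formula for $b_j^*$ for the ordering and the limit $b_j^*\to0$, and exhaustivity for (iii). Your explicit use of the $\Delta u=0$ argument with $i=j$ to separate $\phi_j(s_j^-\cdot)$ from $\phi_j(s_j^+\cdot)$ for all $s\neq r$ is a worthwhile clarification, since the local injectivity in Theorem~\ref{thm:Rayleigh-bifurcation} does not suffice away from $s_*$. The paper's distinctness argument covers this case only implicitly, because it is not restricted to $i\neq j$.
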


\begin{proof}
	The distinctness of the branches was proved above, and
	\eqref{eq:general-countable-bstar}--\eqref{eq:general-countable-order}
	give
	\[
	b_1^*
	=
	b_g^*
	\ge
	b_2^*
	\ge
	\cdots
	\longrightarrow0.
	\]
	
	For every $j$, the profile
	\[
	b_j(s)
	=
	\frac{1-as^2}{A_j^\theta}
	s^{\theta(N-2)-2}
	\]
	has the same unique nondegenerate maximum at $s=s_*$. The dilation
	orbit
	\[
	s\longmapsto\phi_j(s\,\cdot)
	\]
	is regular for $\phi_j\neq0$; otherwise
	$x\cdot\nabla\phi_j=0$, which is incompatible with a nonzero
	$H^1(\mathbb R^N)$ profile. Hence
	Theorem~\ref{thm:Rayleigh-bifurcation} yields the nondegenerate turning
	bifurcation point
	\[
	\bigl(
	b_j^*,\phi_j(s_*\cdot)
	\bigr).
	\]
	
	For $b=0$, each profile pair gives the solution pair
	\[
	\{\pm\phi_j(s_{\rm z}\,\cdot)\},
	\]
	and the distinctness of the branches proves \textup{(i)}.
	
	Now let $m\in\mathbb N$ and
	\[
	0<b<b_m^*.
	\]
	For every $j\le m$,
	\[
	b<b_m^*\le b_j^*.
	\]
	Hence
	\[
	b_j(s)=b
	\]
	has two distinct roots
	\[
	s_j^-(b)<s_*<s_j^+(b).
	\]
	Thus each of the first $m$ profile pairs produces two distinct
	antipodal solution pairs
	\[
	\{\pm\phi_j(s_j^-(b)\,\cdot)\},
	\qquad
	\{\pm\phi_j(s_j^+(b)\,\cdot)\}.
	\]
	Therefore \eqref{eq:general-K} has at least $2m$ distinct antipodal
	pairs, proving \textup{(ii)}.
	
	Finally, \eqref{eq:general-max-threshold} and the exhaustivity of the
	exact spatial class imply
	\[
	b\le b_g^*
	\]
	for every nontrivial solution. This proves \textup{(iii)}.
\end{proof}

\subsection{Radial simple-fold classification}
\label{subsec:general-radial-fold}

The exact Rayleigh reduction detects turning bifurcation points without
requiring any spectral analysis of the linearization. Under additional
regularity and nondegeneracy assumptions on a normalized radial profile,
the corresponding turning point can also be classified as a radial simple
fold.

\begin{corollary}[Radial simple fold]
	\label{cor:general-radial-simple-fold}
	Assume
	\[
	\theta(N-2)>2,
	\]
	and let $\phi\in\mathcal P_{\rm rad}$. Set
	\[
	u_*:=\phi(s_*\cdot).
	\]
	Suppose, in addition, that $g\in C^1(\mathbb R)$, that the Nemytskii
	operator induced by $g$ yields a $C^1$ realization
	\[
	F_\theta:
	\mathbb R\times H^1_{\rm rad}(\mathbb R^N)
	\longrightarrow
	H^{-1}_{\rm rad}(\mathbb R^N)
	\]
	in a neighborhood of $(b_\phi^*,u_*)$, and that
	\[
	s\longmapsto\phi(s\,\cdot)
	\]
	is a $C^1$ regular curve in $H^1_{\rm rad}(\mathbb R^N)$ near $s_*$.
	Assume also that the scalar-field linearization
	\[
	\mathcal L_\phi
	:=
	-\Delta-g'(\phi)
	:
	H^1_{\rm rad}(\mathbb R^N)
	\longrightarrow
	H^{-1}_{\rm rad}(\mathbb R^N)
	\]
	is Fredholm of index zero and radially nondegenerate:
	\[
	\ker\mathcal L_\phi=\{0\}.
	\]
	
	Then $(b_\phi^*,u_*)$ is a radial simple fold of
	\eqref{eq:general-K}. More precisely, in a neighborhood of
	$(b_\phi^*,u_*)$, the radial solution set is a one-dimensional
	$C^1$ manifold locally parametrized by the exact spatial branch
	\[
	s\longmapsto
	\bigl(
	b_\phi(s),\phi(s\,\cdot)
	\bigr),
	\]
	and its projection onto the parameter axis has a nondegenerate
	quadratic maximum at $s=s_*$. Moreover,
	\[
	\boxed{
		\ker
		\left(
		F_{\theta,u}(b_\phi^*,u_*)
		\big|_{H^1_{\rm rad}(\mathbb R^N)}
		\right)
		=
		\operatorname{span}
		\left\{
		\left.
		\frac{d}{ds}\phi(s\,\cdot)
		\right|_{s=s_*}
		\right\}.
	}
	\]
\end{corollary}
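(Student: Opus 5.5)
The plan is to verify the classical simple-fold hypotheses of Subsection~\ref{subsec:Rayleigh-classical-fold} directly at $(b_\phi^*,u_*)$. That means establishing kernel simplicity, the Fredholm index-zero property, and parameter transversality for $F_{\theta,u}(b_\phi^*,u_*)$ on the radial space. Once these hold, the full derivative $DF_\theta(b_\phi^*,u_*)$ on $\mathbb R\times H^1_{\rm rad}$ is surjective with one-dimensional kernel. The implicit function theorem (which needs only $C^1$) then shows that the radial solution set near $(b_\phi^*,u_*)$ is a one-dimensional $C^1$ manifold. The exact branch lies in this manifold and is a regular $C^1$ curve, so it parametrizes it locally. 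The nondegenerate quadratic maximum of the parameter projection is already given by \eqref{eq:general-second-derivative} and Theorem~\ref{thm:dimension-homogeneity-trichotomy}(i).

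First I will compute the linearization. Write $u=u_*$ and $\ell(h)=\int\nabla u\cdot\nabla h\,dx$. Then
\[
F_{\theta,u}(b,u)h=-\bigl(a+bA(u)^\theta\bigr)\Delta h-2\theta bA(u)^{\theta-1}\ell(h)\Delta u-g'(u)h .
\]
By \eqref{eq:general-effective-coefficient} with $\sigma_\phi=1$, we have $a+b_\phi^*A(u_*)^\theta=s_*^{-2}$. Substituting $h(x)=k(s_*x)$ and undoing the dilation conjugates the operator to a positive multiple of
\[
T k:=\mathcal L_\phi k-c\,\ell_\phi(k)\Delta\phi ,
\]
where $c>0$ is an explicit constant and $\ell_\phi(k)=\int\nabla\phi\cdot\nabla k$. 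Dilation is an isomorphism of $H^1_{\rm rad}$ and $H^{-1}_{\rm rad}$, so it suffices to study $T$. Because $\mathcal L_\phi$ is Fredholm of index zero with trivial kernel, it is invertible. Hence $T$ is a rank-one perturbation of an isomorphism and is Fredholm of index zero.

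Next I will use the dilation identity. Differentiating $-\Delta[\phi(\mu\cdot)]\mu^{-2}\cdot\mu^2=\dots$, more precisely $-\mu^{-2}\Delta(\phi(\mu x))=g(\phi(\mu x))$, at $\mu=1$ gives
\[
\mathcal L_\phi(x\cdot\nabla\phi)=2\Delta\phi .
\]
Suppose $Tk=0$. Then $k=c\,\ell_\phi(k)\mathcal L_\phi^{-1}\Delta\phi=\tfrac c2\ell_\phi(k)\,x\cdot\nabla\phi$, so $\ker T\subset\operatorname{span}\{x\cdot\nabla\phi\}$. On the other hand, \eqref{eq:Rayleigh-kernel-direction} already shows that the orbit tangent, which is a multiple of $(x\cdot\nabla\phi)(s_*\cdot)$, lies in the kernel. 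In conjugated form this is the scalar relation $\tfrac c2\ell_\phi(x\cdot\nabla\phi)=1$, which I can double-check via $\ell_\phi(x\cdot\nabla\phi)=\frac{2-N}{2}A_\phi$ together with the explicit value of $s_*$. This gives the displayed kernel formula.

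For transversality I need to show that $F_{\theta,b}=-A(u_*)^\theta\Delta u_*$, which conjugates to $d\,\Delta\phi$ with $d\neq0$, is not in the range. Suppose $Tk=d\Delta\phi$. Inverting $\mathcal L_\phi$ gives $k=t\,x\cdot\nabla\phi$ with $t=\tfrac12\bigl(c\,t\,\ell_\phi(x\cdot\nabla\phi)+d\bigr)=t+\tfrac d2$, a contradiction. So the range has codimension one (index zero, one-dimensional kernel) and misses $F_{\theta,b}$. Consequently $DF_\theta$ is onto and its kernel is $\{0\}\times\ker F_{\theta,u}$. The main obstacle I expect is bookkeeping: making the conjugation constants and the sign of $\ell_\phi(x\cdot\nabla\phi)$ consistent, and justifying the dilation identity in $H^{-1}_{\rm rad}$ under only $C^1$ hypotheses. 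For the latter I would use the assumed $C^1$ regularity of $s\mapsto\phi(s\cdot)$ and of the Nemytskii realization, differentiating the exact identity $F_\theta(b_\phi(s),\phi(s\cdot))=0$ rather than the pointwise PDE. Finally, the implicit function theorem gives the local $C^1$ manifold structure, and the identity \eqref{eq:Rayleigh-LS-identity} is unnecessary, since the quadratic behaviour is read directly from $b_\phi''(s_*)<0$.
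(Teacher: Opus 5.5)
Your proof is correct and has the same skeleton as the paper's. You conjugate the local part of $F_{\theta,u}(b_\phi^*,u_*)$ to $\mathcal L_\phi$ via $a+b_\phi^*A(u_*)^\theta=s_*^{-2}$. You treat the nonlocal term as a rank-one perturbation of an isomorphism, so the operator is Fredholm of index zero. You get the orbit tangent into the kernel by differentiating the exact identity. You close with the regular level-set theorem and $b_\phi''(s_*)<0$.

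You differ from the paper in the kernel step and, more substantially, in the transversality step. The paper only uses that a rank-one perturbation of an invertible operator has $\dim\ker\le1$. For transversality it invokes the symmetry of the bilinear form of $L_*$: the range is then the annihilator of $\dot u_*$, so it suffices to check $\langle F_{\theta,b},\dot u_*\rangle=\tfrac12A(u_*)^\theta\,\frac{d}{ds}A(u_s)|_{s=s_*}\neq0$. You instead use that $F_{\theta,b}=-A(u_*)^\theta\Delta u_*$ points in the same direction as the range of the rank-one term. Then $\mathcal L_\phi^{-1}\Delta\phi\in\operatorname{span}\{x\cdot\nabla\phi\}$ reduces both the kernel question and the range question to one scalar equation. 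Your route needs no self-adjointness and identifies $\mathcal L_\phi^{-1}\Delta\phi$ explicitly. The paper's route is shorter, and it works for any parameter derivative with nonzero pairing against the kernel, not only one aligned with the nonlocal term.

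One correction: your dilation identity has the wrong sign. Differentiating $-\Delta\phi_\mu=\mu^2 g(\phi_\mu)$, with $\phi_\mu=\phi(\mu\,\cdot)$, at $\mu=1$ gives $\mathcal L_\phi(x\cdot\nabla\phi)=2g(\phi)=-2\Delta\phi$. As you wrote it, the relation $\tfrac c2\,\ell_\phi(x\cdot\nabla\phi)=1$ contradicts $c>0$ together with $\ell_\phi(x\cdot\nabla\phi)=\tfrac{2-N}{2}A_\phi<0$, so your planned double-check would fail. The correct relation is $-\tfrac c2\,\ell_\phi(x\cdot\nabla\phi)=1$. It does hold, since $c=4/((N-2)A_\phi)$, and your transversality computation becomes $t=t-d/2$.

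The logic does not actually depend on the sign. Set $\gamma:=c\,\ell_\phi(x\cdot\nabla\phi)$. Then $T(x\cdot\nabla\phi)=0$, which comes from differentiating the exact branch identity, says exactly $\mathcal L_\phi(x\cdot\nabla\phi)=\gamma\Delta\phi$. Moreover $\gamma\neq0$ because $\ker\mathcal L_\phi=\{0\}$ and $x\cdot\nabla\phi\neq0$. Now $Tk=d\Delta\phi$ forces $t=t+d/\gamma$, a contradiction since $d\neq0$. This version also removes any need to justify the dilation identity separately under $C^1$ hypotheses.
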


\begin{proof}
	Set
	\[
	u_s:=\phi(s\,\cdot),
	\qquad
	\dot u_*:=
	\left.
	\frac{d}{ds}u_s
	\right|_{s=s_*}.
	\]
	By the regularity assumption on the dilation orbit,
	\[
	\dot u_*\neq0.
	\]
	Differentiating
	\[
	F_\theta\bigl(b_\phi(s),u_s\bigr)=0
	\]
	at $s=s_*$ and using $b_\phi'(s_*)=0$ gives
	\begin{equation}\label{eq:general-fold-kernel-tangent}
		F_{\theta,u}(b_\phi^*,u_*)\dot u_*=0.
	\end{equation}
	
	For radial $v$,
	\[
	\begin{aligned}
		F_{\theta,u}(b,u)v
		={}&
		-\bigl(a+bA(u)^\theta\bigr)\Delta v
		-g'(u)v
		\\
		&\quad
		-2b\theta A(u)^{\theta-1}
		\left(
		\int_{\mathbb R^N}
		\nabla u\cdot\nabla v\,dx
		\right)\Delta u.
	\end{aligned}
	\]
	Let
	\[
	L_*:=F_{\theta,u}(b_\phi^*,u_*):
	H^1_{\rm rad}(\mathbb R^N)
	\longrightarrow
	H^{-1}_{\rm rad}(\mathbb R^N),
	\]
	and write
	\[
	L_*=B_*+K_*,
	\]
	where
	\[
	B_*v
	=
	-\bigl(a+b_\phi^*A(u_*)^\theta\bigr)\Delta v
	-g'(u_*)v
	\]
	and $K_*$ is the rank-one nonlocal term.
	
	By the exact rescaling relation,
	\[
	a+b_\phi^*A(u_*)^\theta=s_*^{-2}.
	\]
	Under the dilation $y=s_*x$, the operator $B_*$ is conjugate to
	\[
	\mathcal L_\phi=-\Delta-g'(\phi).
	\]
	The Fredholm index-zero property together with
	\[
	\ker\mathcal L_\phi=\{0\}
	\]
	implies that $\mathcal L_\phi$, and hence $B_*$, is invertible.
	Since $K_*$ has rank one, $L_*$ is Fredholm of index zero and
	\[
	\dim\ker L_*\le1.
	\]
	Together with \eqref{eq:general-fold-kernel-tangent} and
	$\dot u_*\neq0$, this yields
	\[
	\ker L_*
	=
	\operatorname{span}\{\dot u_*\}.
	\]
	
	It remains to verify parameter transversality. Since
	\[
	F_{\theta,b}(b,u)
	=
	-A(u)^\theta\Delta u,
	\]
	we have
	\[
	\begin{aligned}
		\left\langle
		F_{\theta,b}(b_\phi^*,u_*),\dot u_*
		\right\rangle
		&=
		A(u_*)^\theta
		\int_{\mathbb R^N}
		\nabla u_*\cdot\nabla\dot u_*\,dx
		\\
		&=
		\frac{A(u_*)^\theta}{2}
		\left.
		\frac{d}{ds}A(u_s)
		\right|_{s=s_*}.
	\end{aligned}
	\]
	Because
	\[
	A(u_s)=s^{2-N}A_\phi,
	\]
	it follows that
	\[
	\left.
	\frac{d}{ds}A(u_s)
	\right|_{s=s_*}
	=
	(2-N)A_\phi s_*^{1-N}
	\neq0.
	\]
	Thus
	\begin{equation}\label{eq:general-fold-transversality-pairing}
		\left\langle
		F_{\theta,b}(b_\phi^*,u_*),\dot u_*
		\right\rangle
		\neq0.
	\end{equation}
	
	The bilinear form associated with $L_*$ is symmetric. Since $L_*$ is
	Fredholm of index zero and
	\[
	\ker L_*=\operatorname{span}\{\dot u_*\},
	\]
	we have
	\[
	\operatorname{Ran}L_*
	=
	\left\{
	f\in H^{-1}_{\rm rad}(\mathbb R^N):
	\langle f,\dot u_*\rangle=0
	\right\}.
	\]
	Hence
	\eqref{eq:general-fold-transversality-pairing} implies
	\[
	F_{\theta,b}(b_\phi^*,u_*)
	\notin
	\operatorname{Ran}L_*.
	\]
	
	Consequently, the full derivative
	\[
	D F_\theta(b_\phi^*,u_*)[\beta,v]
	=
	\beta F_{\theta,b}(b_\phi^*,u_*)
	+
	L_*v
	\]
	is surjective, with
	\[
	\ker D F_\theta(b_\phi^*,u_*)
	=
	\operatorname{span}\{(0,\dot u_*)\}.
	\]
	By the regular level-set theorem, the radial solution set near
	$(b_\phi^*,u_*)$ is therefore a one-dimensional $C^1$ manifold.
	
	The exact curve
	\[
	\gamma(s)
	=
	\bigl(b_\phi(s),u_s\bigr)
	\]
	lies in this manifold and satisfies
	\[
	\gamma'(s_*)
	=
	(0,\dot u_*)
	\neq0.
	\]
	It therefore gives a local parametrization of the radial solution
	manifold. Finally,
	\[
	b_\phi'(s_*)=0,
	\qquad
	b_\phi''(s_*)<0,
	\]
	so the projection onto the parameter axis has a nondegenerate
	quadratic maximum at $s=s_*$. Hence
	$(b_\phi^*,u_*)$ is a radial simple fold.
\end{proof}

\begin{remark}[Role of the additional assumptions]
	The $C^1$ regularity and radial nondegeneracy assumptions above are
	not part of the basic Berestycki--Lions hypotheses and are needed
	only for the local simple-fold classification. They play no role in
	the exact Rayleigh detection of turning bifurcations, in the
	dimension--homogeneity trichotomy, or in the variational hierarchy.
	Whenever the operator realization is of class $C^2$, the preceding
	geometric fold agrees with the classical Lyapunov--Schmidt
	simple-fold criterion discussed in
	Subsection~\ref{subsec:Rayleigh-classical-fold}.
\end{remark}

\begin{remark}[Translation modes]
	In the full space $H^1(\mathbb R^N)$, translation invariance produces
	the additional kernel directions
	\[
	\partial_{x_1}u_*,
	\ldots,
	\partial_{x_N}u_*.
	\]
	Hence the one-dimensional-kernel formulation is naturally made in
	$H^1_{\rm rad}(\mathbb R^N)$, or after factoring out translations.
\end{remark}

\section{Discussion}
\label{sec:discussion}

\subsection{Beyond amplitude and spatial scaling}

The examples above show that amplitude scaling and spatial dilation are
particular realizations of a more general mechanism. Its essential
ingredients are a one-parameter transformation orbit
\[
u(s)=\mathcal G_s\phi
\]
and a generalized Rayleigh functional for which
\begin{equation}\label{eq:discussion-Rayleigh}
	\lambda_\phi(s)
	=
	\mathcal R(\mathcal G_s\phi)
\end{equation}
becomes the actual parameter law of an exact solution branch:
\begin{equation}\label{eq:discussion-exactness}
	F\bigl(
	\mathcal R(\mathcal G_s\phi),
	\mathcal G_s\phi
	\bigr)=0.
\end{equation}

The decisive point is exactness. A critical point of an arbitrary fiber map
or generalized Rayleigh functional may reflect only a degeneracy of an
auxiliary constraint. Once \eqref{eq:discussion-exactness} holds, however,
the scalar Rayleigh geometry becomes part of the geometry of the genuine
solution set. Nondegenerate critical points of the exact Rayleigh profile
then detect turning bifurcations directly through
\[
\frac{d}{ds}\mathcal R(\mathcal G_s\phi)=0.
\]

This changes the role of the Rayleigh functional. Instead of merely
assigning a distinguished parameter level to a state, it may reconstruct
the parameter evolution along a nonlinear solution branch. Consequently,
multiplicity, turning thresholds, and branch hierarchies can become visible
through the geometry of a one-dimensional profile.

The weighted bounded-domain problem illustrates an important robustness of
this mechanism. The introduction of a positive spatial coefficient $k(x)$
changes the normalized profile equation and hence the profile energies and
turning levels, but leaves the exact amplitude geometry unchanged. In
particular, the critical amplitude
\[
s_*
=
\left(
\frac{2a}{4-q}
\right)^{1/(q-2)}
\]
is independent of both the profile and the spatial weight. Thus exact
Rayleigh reduction need not rely on complete spatial homogeneity of the
underlying equation.

This viewpoint suggests a broader strategy for nonlinear bifurcation
problems: search simultaneously for a scalar parameter representation and
a transformation family for which the corresponding exactness condition
reduces to a tractable normalized profile equation. The transformation need
not be a pure amplitude or spatial scaling. Natural candidates include
combined transformations
\[
u_s(x)=s^\alpha u(sx),
\]
anisotropic dilations, and transformations adapted to conservation laws,
homogeneity, or other geometric structures of the equation. For example,
$\alpha=N/2$ preserves the $L^2$ norm and is natural in normalized
nonlinear Schr\"odinger problems and related models; see
\cite{BartschSoave2017,BellazziniSiciliano2011,Ye2015b}.

The method also allows several scalar constraints to coexist. Their
generalized Rayleigh functionals may differ on the ambient state space but
recover the same external parameter along a common exact orbit. The
coincidence of the Nehari and Pohozaev Rayleigh functionals on the exact
spatial Kirchhoff branches provides a concrete example.

It is useful to distinguish this mechanism from an ordinary symmetry
transformation. A symmetry maps solutions of a fixed equation to solutions
of the same equation, while an equivalence transformation prescribes the
change of parameters together with the transformation. In an exact Rayleigh
reduction, by contrast, the parameter evolution is reconstructed from the
transformed state itself:
\[
\lambda_\phi(s)=\mathcal R(\mathcal G_s\phi).
\]
Its geometry is therefore not imposed in advance. In particular,
nonmonotonicity of the resulting parameter law may reveal turning
bifurcations and parameter thresholds that are hidden in the original
operator equation.

The broader problem suggested by this work is therefore to identify
nonlinear equations, scalar constraints, and transformation families for
which a generalized Rayleigh representation admits such an exact
realization.

\subsection{Linearization, spectral transitions, and stability}

Exact Rayleigh reduction also creates a direct link between branch geometry
and the spectrum of the linearized equation. At a critical point of an
exact branch,
\[
F(\lambda_\phi(s),u(s))=0,
\qquad
\lambda_\phi'(s_*)=0,
\]
differentiation gives
\[
F_u(\lambda_*,u_*)u'(s_*)=0.
\]
Thus the tangent to the explicitly constructed orbit provides a kernel
direction of the state linearization. The loss of invertibility is not used
to locate the critical parameter; rather, it emerges as a consequence of
the Rayleigh turning geometry.

This suggests a natural relation between critical points of the exact
Rayleigh profile and spectral transitions of the linearized operator. In
the Kirchhoff case this question is especially interesting because the
linearization contains a nonlocal rank-one perturbation. The radial
simple-fold result above provides one instance in which the Rayleigh tangent
generates the relevant kernel mode and the detected turning bifurcation is
classified as a radial simple fold.

A complete stability theory requires further information. Along a spatial
branch $u_s(x)=U(sx)$, variational constraints and conserved quantities
generally change with $s$, so stability cannot simply be transferred from
the normalized local equation. The relation between turning bifurcations,
Morse-index changes, spectral crossings, and orbital stability therefore
remains a natural direction for further study; see
\cite{CazenaveLions1982,ZhangLiuSquassina2019}.

\section*{Acknowledgements}

During the preparation of this manuscript, ChatGPT (OpenAI) was used for
language editing and structural suggestions. All mathematical arguments,
calculations, references, and conclusions were independently checked and
verified by the author, who takes full responsibility for the final
manuscript.

\section*{Declarations}

\subsection*{Funding}

No funding was received for conducting this study.

\subsection*{Competing interests}

The author has no relevant financial or non-financial interests to disclose.

\subsection*{Data availability}

No datasets were generated or analysed during the current study.

\subsection*{Author contributions}

Yavdat Il'yasov: Conceptualization, methodology, formal analysis,
writing--original draft, writing--review and editing.

\end{document}